\newtheorem{theorem}{Theorem}[section]
\newtheorem*{theorem-non}{Theorem}
\newtheorem{lemma}[theorem]{Lemma}
\newtheorem{corollary}[theorem]{Corollary}
\newtheorem{proposition}[theorem]{Proposition}
\newtheorem{assumption}[theorem]{Assumption}
\theoremstyle{definition}
\newtheorem{definition}[theorem]{Definition}
\newtheorem{example}[theorem]{Example}
\theoremstyle{remark}
\newtheorem{remark}[theorem]{Remark}
\numberwithin{equation}{section}
\definecolor{gray}{rgb}{.5,.5,.5}
\definecolor{black}{rgb}{0,0,0}
\definecolor{blue}{rgb}{0,0,1}
\definecolor{red}{rgb}{1,0,0}
\definecolor{green}{rgb}{0,1,0}
\definecolor{yellow}{rgb}{1,1,.4}
\newcommand{\Plus}{\mathord{\begin{tikzpicture}[baseline=0ex, line width=1, scale=0.13]
\draw (1,0) -- (1,2);
\draw (0,1) -- (2,1);
\end{tikzpicture}}}
\newcommand{\Minus}{\mathord{\begin{tikzpicture}[baseline=0ex, line width=1, scale=0.13]
\draw (0,1) -- (2,1);
\end{tikzpicture}}}
\newcommand{\crossneg}{
\begin{tikzpicture}[baseline=-2]
\draw[white,line width=1.5pt,double=black,double distance=.5pt] (0,-0.1) -- (0.3,0.2);
\draw[white,line width=1.5pt,double=black,double distance=.5pt] (0,0.2) -- (0.3,-0.1);
\end{tikzpicture}}
\newcommand{\crosspos}{
\begin{tikzpicture}[baseline=-2]
\draw[white,line width=1.5pt,double=black,double distance=.5pt] (0,0.2) -- (0.3,-0.1);
\draw[white,line width=1.5pt,double=black,double distance=.5pt] (0,-0.1) -- (0.3,0.2);
\end{tikzpicture}}
\newcommand{\etalchar}[1]{$^{#1}$}
\begin{document}

\title{A lower bound on the average genus of a 2-bridge knot}

\author{Moshe Cohen}
\address{Mathematics Department, State University of New York at New Paltz, New Paltz, NY 12561}
\email{cohenm@newpaltz.edu}

\begin{abstract}
Experimental data from Dunfield et al using random grid diagrams suggests that the genus of a knot grows linearly with respect to the crossing number.  Using billiard table diagrams of Chebyshev knots developed by Koseleff and Pecker and a random model of 2-bridge knots via these diagrams developed by the author with Krishnan and then with Even-Zohar and Krishnan, we introduce a further-truncated model of all 2-bridge knots of a given crossing number, almost all counted twice.  We present a convenient way to count Seifert circles in this model and use this to compute a lower bound for the average Seifert genus of a 2-bridge knot of a given crossing number.
\end{abstract}

\subjclass[2020]{57K10; 05A05}
\keywords{rational knot}

\maketitle

\section{Introduction}

\subsection*{On randomness}  In recent years, many topologists and geometers have utilized randomness to study the behaviors of their favorite mathematical objects.  However, any random model has inherent biases.  Erlandsson, Souto, and Tao \cite{EST:mcg} show that the set of pseudo-Anosov elements of the mapping class group of a surface is generic with respect to several metrics, confirming a result by Maher \cite{Mah} using a metric given by random walks on the Cayley graph of the mapping class group.  On the other hand, Malyutin \cite{Mal} and with Belousov \cite{BelMal} (and see also \cite{Mal:q}) show that hyperbolic links and knots are not generic, that the proportion of satellite knots among all prime non-split links does not converge to zero, contradicting results by Ma \cite{Ma} and Ito \cite{Ito} showing that the closure of a random braid is a hyperbolic link and also a result by Ichihara and Ma \cite{IcMa} showing that a random link via bridge position is hyperbolic.

A solution is to study these objects from \emph{many} different random models with hopes of determining whether these properties are inherent or intrinsic to the model alone.  Dunfield and Thurston \cite{DT:fin} list several models for random three-manifolds that could be developed further.  Work by Petri with Baik, Bauer, Gekhtman, Hamenst\"adt, Hensel, Kastenholz, and Valenzuela \cite{BBGHHKPV}, Thaele \cite{PetTha}, Mirzakhani \cite{MirPet}, and Raimbault \cite{PetRai} showcases the benefits of exploring various models.

Another important avenue is to study further properties of the mathematical objects appearing in previous random models.  Manin \cite{Man:rand} finds bounds for the areas of minimal Seifert surfaces of knot obtained from random embeddings of a polygon studied by Millet \cite{Mil:MC} via Monte Carlo exploration.

The genus of an object is often a particularly accessible invariant for calculations on randomness.  Linial and Nowik \cite{LinNow} draw a correspondence between generic curves in oriented surfaces and oriented chord diagrams and then show that a randomly chosen oriented chord diagram of order $n$ has expected genus $\frac{n}{2}-\Theta(\ln n)$.  Chmutov and Pittel \cite{ChmPit:genus} draw a correspondence between surfaces obtained by gluing the sides of an $n$-gon and chord diagrams and then show that the distribution of the genus of a random chord diagram is
asymptotically Gaussian.

Brooks and Makover \cite{BroMak} obtain a random Riemann surface by randomly gluing together an even number of triangles and then show that the expected genus is between $(1 + \frac{n}{2}) \pm \Theta(\log n)$.  Chmutov and Pittel \cite{ChmPit:poly} randomly glue together the sides of $n$ oriented polygonal discs and then show that with high probability the surface consists of a single component and its genus is asymptotic to a Gaussian random variable.  Even-Zohar and Farber \cite{EZFar} randomly glue together \emph{some} of the sides from the two models above and then show that the genus and number of boundary components asymptotically follow a bivariate normal distribution.

Shrestha \cite{Shr:genus} uses the symmetric group to study random square-tiled surfaces and shows that the genus satisfies a local central limit theorem.


\subsection*{On random knot diagrams} 
How does a given numerical knot invariant grow with respect to the crossing number of the knot?  Random knots can be used to answer this question, provided the model for random knots is conducive to calculating the invariant.  Since several knot invariants are calculated diagrammatically, it will be useful to find models for random knot \emph{diagrams}, particularly diagrams that have been well-studied.

Grid diagrams, introduced (and re-introduced) in \cite{Crom:grid, Brun, Dyn}, were popularized through connections to knot Floer homology in \cite{MOST, MOS}.  Dunfield et al \cite{Dun:knots} developed a random model for these grid diagrams, and Doig \cite{Doig} studied the number of components.   
These grid diagrams can be re-envisioned as petal (or Petaluma) knot diagrams with ``\"ubercrossings'' introduced by Adams et al \cite{Adams:petal, Adams:petal2, Adams:petal3} and studied by Colton et al \cite{CGHS}.  Even-Zohar, Hass, Linial, and Nowik \cite{EZHLN, EZHLN:distpetal} developed a random model for these petal knot diagrams.  It is more difficult to sample \emph{all} knot diagrams, as in work by Chapman \cite{chapman2017asymptotic} and with Cantarella and Mastin \cite{CanChaMas}; using a particular diagrammatic model, for example straight knots studied by Owad \cite{Owad:straight}, may lead to easier computations.

Chebyshev diagrams (of long knots) were introduced by Fischer \cite{Fisc} and studied by Koseleff and Pecker \cite{KosPec3, KosPec4} together with others \cite{KosPecRou:10crossings, KosPecTran:diagrams, BruKosPec} and by this author \cite{Co:clock, Co:3bridge}.  These are related to Lissajous knot diagrams studied by the late Vaughn Jones (1952-2020) with Bogle, Hearst, and Stoilov \cite{BHJS} and Przytycki \cite{JonPrz}.

A Chebyshev diagram can be obtained from the projection of a curve parametrized in three dimensions $x=T_a(t)$, $y=T_b(t)$, and $z=T_c(t+\phi)$ by Chebyshev polynomials $T_n(t)$ for some $a,b,c,n\in\mathbb{N}$ and a phase shift $\phi\in\mathbb{R}$.  Projections of these Chebyshev diagrams can be re-envisioned as trajectories on $a\times b$ billiard tables, with $a$ and $b$ relatively prime for knots, as follows:  
a billiard ball fired at $45^\circ$ from the lower left corner bounces through every $1\times1$ square with slope either 1 or -1 and leaves the table at one of the corners on the right as in Figure \ref{fig:Cheb}; including crossing information gives \emph{billiard table diagrams} $\widetilde{D}$.

\renewcommand{\thesubfigure}{}
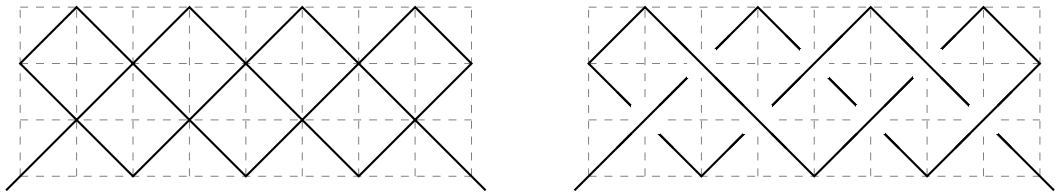
\begin{figure}[h]
\centering
\subfigure{
\begin{tikzpicture}[scale=.75]
    \foreach \i in {0,...,8} {
        \draw [very thin,dashed,gray] (\i,1) -- (\i,4);
    }
    \foreach \i in {1,...,4} {
        \draw [very thin,dashed,gray] (0,\i) -- (8,\i);
    }
\draw[thick](-.25,.75) -- (3,4) -- (6,1) -- (8,3) -- (7,4) -- (4,1) -- (1,4) -- (0,3) -- (2,1) -- (5,4) -- (8.25,.75);
\end{tikzpicture}
}
\hspace{2em}
\subfigure{
\begin{tikzpicture}[scale=.75]
    \foreach \i in {0,...,8} {
        \draw [very thin,dashed,gray] (\i,1) -- (\i,4);
    }
    \foreach \i in {1,...,4} {
        \draw [very thin,dashed,gray] (0,\i) -- (8,\i);
    }
\draw[thick](-.25,.75) -- (3,4) -- (6,1) -- (8,3) -- (7,4) -- (4,1) -- (1,4) -- (0,3) -- (2,1) -- (5,4) -- (8.25,.75);
    \foreach \i/\j in {1/2,4/3,5/2,7/2} {
        \draw[thick, black] (\i-.5,\j+.5) -- (\i+.5,\j-.5);
				\fill[white] (\i-.25,\j-.25) rectangle (\i+.25,\j+.25);
        \draw[thick, black] (\i-.5,\j-.5) -- (\i+.5,\j+.5);
    }
    \foreach \i/\j in {2/3,3/2,6/3} {
        \draw[thick, black] (\i-.5,\j-.5) -- (\i+.5,\j+.5);
				\fill[white] (\i-.25,\j-.25) rectangle (\i+.25,\j+.25);
        \draw[thick, black] (\i-.5,\j+.5) -- (\i+.5,\j-.5);
    }
\end{tikzpicture}
}
\caption{\label{fig:Cheb} The projection of a $3\times 8$ billiard table diagram and a (reduced) billiard table diagram $\widetilde{D}$ obtained from the (reduced) billiard table word $w=+-{}-++-+$.}
\end{figure}

In particular Koseleff and Pecker show that all knots can be realized by Chebyshev diagrams.  Knots with bridge number $br$ can be realized for  any $a\geq 2br-1$ and $a<b$.

The author with Krishnan \cite{CoKr} and with Even-Zohar and Krishnan \cite{CoEZKr} developed a random model for these $a=3$ Chebyshev diagrams by taking a random string of $\{+,-\}^n$, where $n=b-1$ is the number of crossings of the diagram and where $+$ and $-$ correspond to the slope of the overstrand in the crossing:  $\crosspos$ and $\crossneg$, respectively.  Because $a=3$, the only knots appearing in this model are 2-bridge knots and the unknot.  The sequel paper computed an exact formula for the probability of a such a knot with crossing number $c$ appearing in a random binary string of length $n$.

One reason to restrict to the setting of 2-bridge knots (or rational knots) is that they can easily be described either by (finite) continued fractions (giving rational numbers), which we use below, or by closures of rational tangles, as developed by the late John Conway (1937-2020) \cite{Con}.

\subsection*{On the Seifert genus of a knot} 
Kanenobu \cite[Assertion 2]{Kan:genus} gives a formula for the Seifert genus of a 2-bridge knot based on the notation for Conway's normal form.  
Murasugi \cite{Mur:genus} shows that for any alternating knot with a constant incidence number, the genus is exactly equal to one half of the degree of its Alexander polynomial.  Crowell \cite[Theorem 3.5]{Cro:genus} shows that for an alternating link type with multiplicity, the degree of its reduced Alexander polynomial is one less than twice the genus plus the multiplicity.  Suzuki and Tran \cite{SuzTra} relate genera of 2-bridge knots to epimorphisms of the respective knot groups.

Baader, Kjuchukova, Lewark, Misev, and Ray \cite{BKLMR} show that the expected value of the ratio of the 4-genus $g_4$ and the Seifert genus $g$ over all 2-bridge knots whose 4-plat presentation, in the sense of Conway, has $2n$ crossings is $\lim_{n\rightarrow\infty}\left\langle\frac{g_4}{g}\right\rangle_n=0$.

Dunfield et al \cite{Dun:knots} use rejection sampling to compile a list of one knot per crossing number for each crossing number up to 1000 and compute upper and lower bounds on the genus of each of these knots.  Their experimental data suggests that the genus of any knot grows linearly with respect to its crossing number.  
  This result especially motivated the work in the present paper.


\subsection*{On the present paper} 
 Here only 2-bridge knots will be considered, and so $c\geq 3$.  We consider \emph{all} 2-bridge knots of crossing number $c$, counted exactly once or twice according to Assumption \ref{ass:model} and Remark \ref{rem:palindrome}, by considering every case of $\varepsilon_i\in\{1,2\}$ for $2\leq i \leq c-1$ and $\varepsilon_1=1=\varepsilon_c$ in Equations (\ref{eq:oddword}) and (\ref{eq:evenword}):
$$(+)^{\varepsilon_1}(-)^{\varepsilon_2}(+)^{\varepsilon_3}(-)^{\varepsilon_4}\ldots(-)^{\varepsilon_{c-1}}(+)^{\varepsilon_c} \text{ for $c$ odd and}$$
$$(+)^{\varepsilon_1}(-)^{\varepsilon_2}(+)^{\varepsilon_3}(-)^{\varepsilon_4}\ldots(+)^{\varepsilon_{c-1}}(-)^{\varepsilon_c} \text{ for $c$ even},$$
with reduced length $\ell=\sum_{i=1}^c \varepsilon_i \equiv 1 \mod 3$.  Each of these reduced billiard table words $w$ produces an alternating diagram $D$ by Theorem \ref{thm:alt}.  The total number of these words or diagrams produced is given by Theorem \ref{thm:number}.  See Section \ref{sec:model} for details.

From this alternating diagram $D$, the Seifert genus $g(K)=1-\frac{1+s-c}{2}=\frac{1-s+c}{2}$ of the knot $K$ can be computed via Seifert's algorithm \cite{Sei} counting the number of Seifert circles $s$ and using the crossing number $c=c(K)$.  We count this number via vertically-smoothed crossings in Theorem \ref{thm:Seifert} with bounds given in Corollary \ref{cor:upper}.  Examples \ref{ex:c6} and \ref{ex:c7} are worked out for $c=6$ and $c=7$, respectively, with figures inside Tables \ref{tab:c6} and \ref{tab:c7}, respectively.  See Section \ref{sec:Seifert} for details.

From this we arrive at the Main Theorem \ref{thm:main}, presented in a condensed form here.

\begin{theorem-non}
A lower bound on the average genus of a 2-bridge knot with given crossing number $c$ is roughly
\begin{equation*}
\frac{c-1}{2}-\left(\frac{3}{2(2^{c-2})}\right)\left(\sum_{i=2}^{c-1}\sum_{d_1=0}^{i-2}\sum_{\substack{d_2=0 \\ c+d_1+d_2+(0 or 1)\equiv1}}^{c-i-1}\binom{i-2}{d_1}\delta(i)\binom{c-i-1}{d_2} \right).
\end{equation*}

This average is taken over all 2-bridge knots appearing twice except for those with palindromic type only appearing once.
\end{theorem-non}

Corollary \ref{cor:sym} shortens this summation due to symmetry.  Examples \ref{ex:c6ii} and \ref{ex:c7ii} apply this Main Theorem to our $c=6$ and $c=7$ cases, respectively, showing the calculations explicitly.

\subsection*{Acknowledgements} The author would like to thank Nathan Dunfield for his experimental work on genus, John McCleary for his pointing out an important result, Marina Ville for her hospitality during which a portion of this work was completed, Christopher Panna and Lumina Resnick for their hospitality during which another portion of this work was completed, and Adam Lowrance for his reading of an earlier draft.

\section{Alternating diagrams of 2-bridge knots}
\label{sec:model}

Both \cite{CoKr} and \cite{CoEZKr} discuss moves on billiard table diagrams that are similar to Reidemeister moves:  the \emph{internal reduction move} that deletes a run $+++$ or $-{}-{}-$ of three in a row and the \emph{external reduction move} that deletes $++-$ or $--+$ only from the start of the word or $-++$ or $+-{}-$ only from the end of the word.

For crossing number $c\geq 3$, a word in $\{+,-\}$ can thus be reduced by these moves to one of the form
\begin{equation}
\label{eq:oddword}
(+)^{\varepsilon_1}(-)^{\varepsilon_2}(+)^{\varepsilon_3}(-)^{\varepsilon_4}\ldots(-)^{\varepsilon_{c-1}}(+)^{\varepsilon_c} \text{ for $c$ odd},
\end{equation}
\begin{equation}
\label{eq:evenword}
(+)^{\varepsilon_1}(-)^{\varepsilon_2}(+)^{\varepsilon_3}(-)^{\varepsilon_4}\ldots(+)^{\varepsilon_{c-1}}(-)^{\varepsilon_c} \text{ for $c$ even},
\end{equation}
$$(-)^{\varepsilon_1}(+)^{\varepsilon_2}(-)^{\varepsilon_3}(+)^{\varepsilon_4}\ldots(+)^{\varepsilon_{c-1}}(-)^{\varepsilon_c}  \text{ for $c$ odd, or}$$
$$(-)^{\varepsilon_1}(+)^{\varepsilon_2}(-)^{\varepsilon_3}(+)^{\varepsilon_4}\ldots(-)^{\varepsilon_{c-1}}(+)^{\varepsilon_c} \text{ for $c$ even}.$$
where $\varepsilon_1=1=\varepsilon_c$ and all other $\varepsilon_i\in\{1,2\}$.  These shall be referred to as \emph{reduced billiard table words}.

We refer to a run of three in a row as a \emph{triple}, a run of two as a \emph{double}, and a run of one as a \emph{single}.

By the following lemma, the number of ways of writing a 2-bridge knot in a reduced word is either 4 or 8:  
\begin{lemma}
\label{lem:numberreducedwords}
(adapted from Cohen-Krishnan \cite[Lemma 2.20]{CoKr} based on work by Schubert \cite{Sch} and Koseleff-Pecker \cite{KosPec4})
Let $K$ be a 2-bridge knot.  Then there are exactly two reduced lengths $\ell$ for $K$. Furthermore, these lengths are $\ell_0\equiv 0\text{ mod }3$ and $\ell_1\equiv 1\text{ mod }3$, and the number of ways $r(\ell)$ to write $K$ in reduced length $\ell$ is either $2$ or $4$.
\end{lemma}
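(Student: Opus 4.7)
The plan is to combine Schubert's classification of 2-bridge knots with the Koseleff-Pecker parametrization via $a=3$ Chebyshev diagrams and the elementary length accounting of the two reduction moves.

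The first step is to observe that each reduction — the internal reduction (delete a triple $+++$ or $---$) and the external reduction (delete $++-$ or $--+$ at the start, or $-++$ or $+--$ at the end) — removes exactly three symbols. Hence $\ell \bmod 3$ is invariant under reduction, and every fully reduced billiard table word for a given $K$ sits in the same residue class mod $3$ as the Chebyshev word it was reduced from.

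The second step is to identify the two allowed residue classes. Koseleff-Pecker's realizability theorem asserts that every 2-bridge knot admits an $a=3$ Chebyshev representation of degree $b$ whenever $\gcd(3, b) = 1$, i.e.\ $b \equiv 1$ or $2 \pmod 3$, so the achievable word lengths $\ell = b - 1$ lie in residue classes $0$ or $1$ mod $3$. Via the correspondence between reduced billiard words and continued-fraction-style expansions of the rational $p/q$ parametrizing $K = K(p,q)$, the classical identity $[a_1, \ldots, a_n] = [a_1, \ldots, a_n - 1, 1]$ exhibits two expansions of each rational whose lengths differ by $1$, forcing the two reduced lengths to populate both residue classes $\{0, 1\}$ precisely and yielding the asserted $\ell_0$ and $\ell_1$.

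The third step is to count, for each fixed length, the reduced words representing $K$. Schubert's theorem $K(p, q) \cong K(p, q')$ iff $q' \equiv q^{\pm 1} \pmod p$ limits the distinct rationals yielding $K$ to at most two. Combined with the starting-sign choice ($+$ versus $-$) implicit in the four families of reduced words listed in Section \ref{sec:model}, this yields a generic count of $r(\ell) = 4$. When $q^2 \equiv 1 \pmod p$, the values $q$ and $q^{-1}$ collapse and the count drops to $r(\ell) = 2$. The main obstacle is this third step: verifying that exactly these two symmetries — the $q \leftrightarrow q^{-1}$ collapse of Schubert and the $+ \leftrightarrow -$ starting-sign choice — account for \emph{all} identifications among the reduced words, and that reversal of a word does not introduce further coincidences. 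Carrying this through amounts to a careful tracking of reversal, complementation, and the parity of $c$, essentially as done in \cite{CoKr}.
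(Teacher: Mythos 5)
The paper does not actually prove this lemma: it is imported (``adapted from'') \cite[Lemma 2.20]{CoKr}, so there is no in-paper argument to compare yours against, and your task is effectively to reprove a cited result. Judged on its own terms, your outline assembles the right ingredients but leaves genuine gaps at exactly the two places where the real work lives. First, the claim that there are \emph{exactly two} reduced lengths does not follow from what you establish. Your steps show that every reduced length lies in residue class $0$ or $1$ mod $3$ (via $\gcd(3,b)=1$) and that at least two expansions exist, but nothing rules out, say, two distinct reduced lengths both congruent to $0$; for that you need essential uniqueness of the reduced word up to the listed symmetries, which is precisely the content of the Koseleff--Pecker/Cohen--Krishnan analysis you are trying to reconstruct. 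Second, the identity $[a_1,\ldots,a_n]=[a_1,\ldots,a_n-1,1]$ concerns ordinary integer continued fractions, whereas the reduced billiard word corresponds to a $\pm1$-type expansion whose length is the crossing count of the diagram, not the number of partial quotients; the passage between the two is not supplied. The actual mechanism relating the two reduced lengths is the interior swap $\varepsilon_i=1\leftrightarrow\varepsilon_i=2$: it sends a word of length $\ell=c+d$ (with $d$ interior doubles) to one of length $\ell'=c+(c-2-d)$, so that $\ell+\ell'=3c-2\equiv1\pmod 3$, and combined with the exclusion of class $2$ this forces one length in each of the classes $0$ and $1$ (and forces $\ell\neq\ell'$). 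Your continued-fraction step does not deliver this.

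For the count $r(\ell)\in\{2,4\}$ you explicitly defer the verification to \cite{CoKr}, so that part is a restatement rather than a proof. The cleaner bookkeeping, consistent with Remark \ref{rem:palindrome}, is that for fixed $\ell$ the reduced words representing $K$ form a single orbit under reversal and mirror (the length-changing $1\leftrightarrow2$ swap moves between $\ell_0$ and $\ell_1$ and so does not affect $r(\ell)$); the orbit has size $4$ unless the word is of palindromic type, in which case it has size $2$. Matching reversal to Schubert's $q\mapsto q^{\pm1}$ and mirror to the starting sign is the right idea, but identifying the palindromic-type collapse with $q^2\equiv\pm1\pmod p$ and checking that no further coincidences occur is exactly the step you have not carried out.
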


From a reduced word $w$, another word yielding the same knot can be obtained by three techniques:  reversing the word; taking the mirror image of the word, replacing each $+$ with $-$ and vice versa; and replacing all interior $\varepsilon_i=1$ with $\varepsilon_i=2$ and vice versa for every $i=2,\ldots,c-1$.

\begin{assumption}
\label{ass:model}
To reduce the number of ways of writing a 2-bridge knot in a reduced word further to 1 or 2, only words beginning with $+$ and with length $\ell_1\equiv 1\text{ mod }3$ will be considered in the current work.  That is, we only consider reduced billiard table words of the forms in Equations (\ref{eq:oddword}) and (\ref{eq:evenword}).
\end{assumption}

\begin{remark}
\label{rem:palindrome}
All 2-bridge knots appear twice in this model except for \emph{palindromic} words with $c$ odd and words that are equal to the reverse of the mirror image of $w$ with $c$ even.  We will collectively refer to all of these words as having \emph{palindromic type}; these appear exactly once in the model.  
\end{remark}

Notationally it is convenient to view these knot diagrams $\widetilde{D}$ as braids:  starting from left to right, with the labeling of strands going upwards, but with a plat closure.  They are of the form
$$\sigma_1\sigma_2^{\pm1}\sigma_1^{\pm1}\sigma_2^{\pm1}\ldots\sigma_1 \text{ for $c$ odd or}$$
$$\sigma_1\sigma_2^{\pm1}\sigma_1^{\pm1}\sigma_2^{\pm1}\ldots\sigma_2^{-1} \text{ for $c$ even}.$$
We take the convention that the $i$th strand in $\sigma_i$ passes over the $i+1$st so that $\sigma_i^{+1}$ corresponds with a $+$ and $\sigma_i^{-1}$ corresponds with a $-$ in the reduced billiard table word $w$.

Reduced billiard table diagrams $\widetilde{D}$ from reduced billiard table words $w$ can be simplified to alternating diagrams $D$:

\begin{theorem}
\label{thm:alt}
(Mentioned without proof in \cite{CoEZKr}) A reduced billiard table diagram $\widetilde{D}$ from a reduced billiard table word $w$ corresponds to a unique alternating diagrams $D$.  
\end{theorem}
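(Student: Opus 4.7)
The plan is to build $D$ from $\widetilde{D}$ by a sequence of local Reidemeister-style moves, one per double $\varepsilon_j = 2$, each of which removes exactly one crossing; then to check that the resulting diagram is alternating. First I would pass to the braid description in the paper: $\widetilde{D}$ is the plat closure of a $3$-braid with letters $\sigma_{i_1}^{s_1}, \ldots, \sigma_{i_\ell}^{s_\ell}$, the indices $i_k \in \{1,2\}$ alternating strictly from $i_1 = 1$, and the signs $s_k \in \{\pm 1\}$ constant within each run of $w$ and flipping between consecutive runs. Because $w$ is reduced, adjacent letters of the braid carry the same sign precisely when they sit inside a double.

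At each double I would identify in the planar diagram a small bigon bounded by two arcs of the projection between two same-sign crossings on adjacent generators $\sigma_1$ and $\sigma_2$. A Reidemeister II move applied to this bigon (after flattening it via the braid relation $\sigma_1 \sigma_2 \sigma_1 = \sigma_2 \sigma_1 \sigma_2$ to convert the mixed-generator pair $\sigma_1^s \sigma_2^s$ into a cancellable configuration) eliminates one of the two crossings and leaves a single crossing whose sign is opposite to the signs of the adjacent runs. Since different doubles are separated by a run of opposite sign, the local simplifications at distinct doubles do not overlap and can be performed independently, yielding a diagram with exactly $c$ crossings, one per run of $w$.

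To verify that $D$ is alternating, I would read the crossing signs of the resulting braid. By construction, it has strictly alternating generator indices (inherited from the billiard structure) and strictly alternating signs (inherited from the alternating $\pm$ runs of $w$, preserved by the local reductions). Walking along each of the two bridges of the plat closure and tracking which strand crosses over at each intersection then confirms that over and under alternate along the knot. Uniqueness of $D$ follows because the simplification at each double is local and deterministic, and distinct doubles do not interact.

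The main obstacle will be carrying out the bigon removal cleanly: because the two crossings of a double lie on distinct braid generators, the removal is not a pure Reidemeister II move and must combine the braid relation with an R-II cancellation. A small case analysis will also be needed at the boundary doubles adjacent to the fixed singles $\varepsilon_1 = 1 = \varepsilon_c$, and at consecutive doubles with opposite interior signs, to ensure the sign of each surviving crossing is consistent with the overall alternating pattern.
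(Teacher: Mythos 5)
There is a genuine gap, and it lies in the core local move. You propose to turn the two crossings of a double $\sigma_1^s\sigma_2^s$ into a single crossing by ``combining the braid relation with an R-II cancellation,'' leaving the rest of the diagram untouched. This cannot work: a Reidemeister~II move removes two crossings and leaves none, and both the braid relation and free cancellation preserve the exponent sum in $B_3$, so no sequence of such moves converts a local piece of exponent sum $\pm 2$ into a single letter of exponent sum $\pm 1$ while fixing everything else. The reduction from two crossings to one genuinely requires a move that is only available for the \emph{plat closure}: the paper fixes the left half of the knot and rotates the entire right half by $180^\circ$ (Figure~\ref{fig:two-to-one}), which replaces $\sigma_1\sigma_2$ by $\sigma_2^{-1}$ and $\sigma_2^{-1}\sigma_1^{-1}$ by $\sigma_1$ (Table~\ref{tab:wtoD}). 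Because that rotation acts on everything to the right of the double, the reductions at distinct doubles are \emph{not} independent and must be performed from left to right; this ordering is precisely what guarantees that each double is encountered in the form $\sigma_1\sigma_2$ or $\sigma_2^{-1}\sigma_1^{-1}$ rather than the other two a priori possible configurations, a point your ``do not overlap and can be performed independently'' claim skips over.

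Your verification of alternation also rests on a false structural claim. The resulting diagrams $D$ do not have strictly alternating generator indices or strictly alternating signs: the first entry of Table~\ref{tab:c6} is $\sigma_1^3\sigma_2^{-1}\sigma_1\sigma_2^{-1}$, with three consecutive $\sigma_1$'s. What is actually true, and what makes $D$ alternating, is that every crossing of $D$ is either $\sigma_1^{+1}$ or $\sigma_2^{-1}$ (each of the four run types lands on one of these two letters), together with the observation about the long end strands entering as an overcrossing on the left and an undercrossing on the right. Your proof would need to replace the ``strictly alternating indices and signs'' argument with this homogeneity statement.
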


\begin{proof}
Figure \ref{fig:two-to-one} (\cite[similar to Figure 3]{CoEZKr}) demonstrates that every run of length two in $w$ corresponds to a single crossing in a new diagram $D$.

\begin{figure}[h]
\begin{center}
\begin{tikzpicture}[scale=0.5,white,double distance=2.0pt]
\draw[black] (0,0) -- (.5,.5);
\draw[black] (.5,1.5) -- (0,2) -- (1,3) -- (1.5,2.5);
\draw[black] (7.5,1.5) -- (8,2) -- (7,3) -- (6.5,2.5);
\draw[black] (8,0) -- (7.5,.5);
\draw[black] (11,0) -- (11.5,.5);
\draw[black] (11.5,1.5) -- (11,2) -- (12,3) -- (12.5,2.5);
\draw[black] (7.5,1.5) -- (8,2) -- (7,3) -- (6.5,2.5);
\draw[black] (8,0) -- (7.5,.5);
\draw[black] (16.5,.5) -- (17,0) -- (18,1) -- (17.5,1.5);
\draw[black] (17.5,2.5) -- (18,3);
    \foreach \i in {5,16} {
        \draw[black] (\i-.5,2.5) -- (\i,3) -- (\i+.5,2.5);
    }
    \foreach \i in {2,6,13,15} {
        \draw[black] (\i-.5,.5) -- (\i,0) -- (\i+.5,.5);
    }
    \foreach \i in {3} {
        \draw[thick, red] (\i-.5,2.5) -- (\i,3) -- (\i+.5,2.5);
    }
    \foreach \i in {4} {
        \draw[thick, red] (\i-.5,.5) -- (\i,0) -- (\i+.5,.5);
    }
\draw[->,black] (9,1.5) -- (10,1.5);
    \foreach \i/\j in {1/1,12/1} {
        \draw[black] (\i-.5,\j+.5) -- (\i+.5,\j-.5);
				\fill[white] (\i-.25,\j-.25) rectangle (\i+.25,\j+.25);
        \draw[black] (\i-.5,\j-.5) -- (\i+.5,\j+.5);
    }
    \foreach \i/\j in {3/1,4/2} {
        \draw[thick, red] (\i-.5,\j+.5) -- (\i+.5,\j-.5);
				\fill[white] (\i-.25,\j-.25) rectangle (\i+.25,\j+.25);
        \draw[thick, red] (\i-.5,\j-.5) -- (\i+.5,\j+.5);
    }
    \foreach \i/\j in {2/2,13/2} {
        \draw[black] (\i-.5,\j-.5) -- (\i+.5,\j+.5);
				\fill[white] (\i-.25,\j-.25) rectangle (\i+.25,\j+.25);
        \draw[black] (\i-.5,\j+.5) -- (\i+.5,\j-.5);
    }
    \foreach \i/\j in {14/2} {
        \draw[thick, red] (\i-.5,\j-.5) -- (\i+.5,\j+.5);
				\fill[white] (\i-.25,\j-.25) rectangle (\i+.25,\j+.25);
        \draw[thick, red] (\i-.5,\j+.5) -- (\i+.5,\j-.5);
    }
    \foreach \i/\j in {5/1,6/2,7/1,15/2,16/1,17/2} {
        \draw[black] (\i-.5,\j-.5) -- (\i+.5,\j+.5);
        \draw[black] (\i-.5,\j+.5) -- (\i+.5,\j-.5);
    }
        \draw[red] (13.5,.5) -- (14.5,.5);
\end{tikzpicture}\\
\begin{tikzpicture}[scale=0.5,white,double distance=2.0pt]
\draw[black] (0,0) -- (.5,.5);
\draw[black] (.5,1.5) -- (0,2) -- (1,3) -- (1.5,2.5);
\draw[black] (7.5,1.5) -- (8,2) -- (7,3) -- (6.5,2.5);
\draw[black] (8,0) -- (7.5,.5);
\draw[black] (11,0) -- (11.5,.5);
\draw[black] (11.5,1.5) -- (11,2) -- (12,3) -- (12.5,2.5);
\draw[black] (7.5,1.5) -- (8,2) -- (7,3) -- (6.5,2.5);
\draw[black] (8,0) -- (7.5,.5);
\draw[black] (16.5,.5) -- (17,0) -- (18,1) -- (17.5,1.5);
\draw[black] (17.5,2.5) -- (18,3);
    \foreach \i in {5,14,16} {
        \draw[black] (\i-.5,2.5) -- (\i,3) -- (\i+.5,2.5);
    }
    \foreach \i in {2,6,13} {
        \draw[black] (\i-.5,.5) -- (\i,0) -- (\i+.5,.5);
    }
    \foreach \i in {3} {
        \draw[thick, red] (\i-.5,2.5) -- (\i,3) -- (\i+.5,2.5);
    }
    \foreach \i in {4} {
        \draw[thick, red] (\i-.5,.5) -- (\i,0) -- (\i+.5,.5);
    }
\draw[->,black] (9,1.5) -- (10,1.5);
    \foreach \i/\j in {1/1,3/1,12/1,14/1} {
        \draw[black] (\i-.5,\j+.5) -- (\i+.5,\j-.5);
				\fill[white] (\i-.25,\j-.25) rectangle (\i+.25,\j+.25);
        \draw[black] (\i-.5,\j-.5) -- (\i+.5,\j+.5);
    }
    \foreach \i/\j in {15/1} {
        \draw[thick, red] (\i-.5,\j+.5) -- (\i+.5,\j-.5);
				\fill[white] (\i-.25,\j-.25) rectangle (\i+.25,\j+.25);
        \draw[thick, red] (\i-.5,\j-.5) -- (\i+.5,\j+.5);
    }
    \foreach \i/\j in {2/2,13/2} {
        \draw[black] (\i-.5,\j-.5) -- (\i+.5,\j+.5);
				\fill[white] (\i-.25,\j-.25) rectangle (\i+.25,\j+.25);
        \draw[black] (\i-.5,\j+.5) -- (\i+.5,\j-.5);
    }
    \foreach \i/\j in {4/2,5/1} {
        \draw[thick, red] (\i-.5,\j-.5) -- (\i+.5,\j+.5);
				\fill[white] (\i-.25,\j-.25) rectangle (\i+.25,\j+.25);
        \draw[thick, red] (\i-.5,\j+.5) -- (\i+.5,\j-.5);
    }
    \foreach \i/\j in {6/2,7/1,16/1,17/2} {
        \draw[black] (\i-.5,\j-.5) -- (\i+.5,\j+.5);
        \draw[black] (\i-.5,\j+.5) -- (\i+.5,\j-.5);
    }
        \draw[thick, red] (14.5,2.5) -- (15.5,2.5);
\end{tikzpicture}
\end{center}
\caption{Fix the left-hand side of the knot.  Grasp the right-hand side of the knot and rotate forward or backward (depending on the crossing information) by $180^\circ$ to replace two red crossings in the reduced billiard table diagram $\widetilde{D}$ obtained from a run of two in $w$ with a single crossing in $D$, as in \cite[Figure 3]{CoEZKr}.}
\label{fig:two-to-one}
\end{figure}
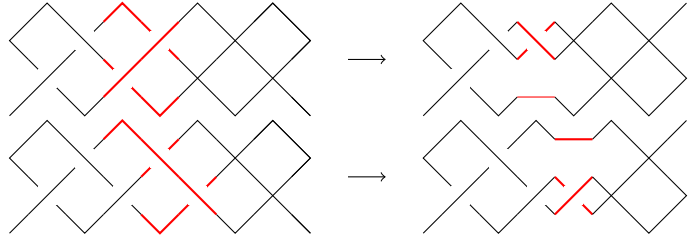

Figure \ref{fig:two-to-one} shows only two of four possible pictures:  with a double $++$ as $\sigma_1\sigma_2$ and with a double $-{}-$ as $\sigma_2^{-1}\sigma_1^{-1}$.  Omitted are the cases with a double $++$ as $\sigma_2\sigma_1$ and with a double $-{}-$ as $\sigma_1^{-1}\sigma_2^{-1}$.  These cases cannot occur as long as these moves are performed from left to right.  That is, the first run of two from the left will have its first letter in the correct place by sign:  $+$ yielding $\sigma_1$ and $-$ yielding $\sigma_2^{-1}$.

Finally we conclude with the facts that the first crossing is always positive here, so that the ``long'' strand to the left comes from an overcrossing, and that the last crossing is positive if the crossing number $c$ is odd and negative if $c$ is even, so that in both cases the ``long'' strand to the right comes from an undercrossing.

This guarantees that the crossings shown in this long knot are alternating with respect to each other.
\end{proof}

It is important to note that these alternating diagrams $S$ are no longer of the billiard table form.  Table \ref{tab:wtoD} explains this correspondence between $w$ and $D$.

\begin{center}
\begin{table}[h]
\begin{tabular}{|c||c|c|c|c|}
\hline
&&&&\\
Run in reduced billiard table word $w$ 				& $(+)^1$			& $(+)^2$					& $(-)^1$					& $(-)^2$			\\
&&&&\\
\hline
&&&&\\
Crossing in alternating diagram $D$		& $\sigma_1$	& $\sigma_2^{-1}$	& $\sigma_2^{-1}$	& $\sigma_1$	\\
&&&&\\
&& $\crossneg$ & $\crossneg$ &\\
&$\crosspos$ &&& $\crosspos$ \\
&&&&\\
\hline
\end{tabular}
\caption{Following Figure \ref{fig:two-to-one}, each run in the reduced billiard table word $w$ is converted to a single crossing in the alternating diagram $D$.}
\label{tab:wtoD}
\end{table}
\end{center}

To see many examples, the reader is encouraged to skip ahead to Table \ref{tab:c6} from Example \ref{ex:c6} and Table \ref{tab:c7} from Example \ref{ex:c7} to see how the billiard table word $w$ becomes an alternating word with an alternating diagram in the first three columns of each table.

To conclude this section, we count the number of knots appearing on our list of knots for a given crossing number $c$.

The following result is discussed in Guichard \cite{Gui} and more recently McCleary \cite[``Indicators'' p.206]{McC:binom}:
\begin{proposition}
\label{prop:Net}
(Netto \cite[p.20]{Net})
\begin{equation}
\binom{k}{0}+\binom{k}{3}+\binom{k}{6}+\ldots=\frac{1}{3}\left(2^k+2\cos \hspace{5mm}\frac{k\pi}{3}\hspace{5mm}\right)
\end{equation}
\begin{equation}
\binom{k}{1}+\binom{k}{4}+\binom{k}{7}+\ldots=\frac{1}{3}\left(2^k+2\cos \frac{(k-2)\pi}{3}\right)
\end{equation}
\begin{equation}
\binom{k}{2}+\binom{k}{5}+\binom{k}{8}+\ldots=\frac{1}{3}\left(2^k+2\cos \frac{(k-4)\pi}{3}\right)
\end{equation}
\end{proposition}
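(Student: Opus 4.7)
The plan is to establish all three identities simultaneously by the \emph{roots of unity filter}. Let $\omega=e^{2\pi i/3}$ denote a primitive cube root of unity. Since $\tfrac{1}{3}(1+\omega^s+\omega^{2s})$ equals $1$ when $s\equiv 0\pmod 3$ and $0$ otherwise, for each residue $r\in\{0,1,2\}$ I would write
$$\sum_{j\equiv r\pmod 3}\binom{k}{j}=\frac{1}{3}\sum_{t=0}^{2}\omega^{-rt}\sum_{j=0}^{k}\binom{k}{j}\omega^{tj}=\frac{1}{3}\sum_{t=0}^{2}\omega^{-rt}(1+\omega^t)^k,$$
the last step by the binomial theorem.

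The next step is to evaluate $(1+\omega^t)^k$ for $t=0,1,2$. The elementary identities $1+\omega=e^{i\pi/3}$ and $1+\omega^2=e^{-i\pi/3}$ (both read off from $1+\omega+\omega^2=0$, or geometrically from the unit circle) give
$$(1+\omega^0)^k=2^k,\qquad (1+\omega)^k=e^{ik\pi/3},\qquad (1+\omega^2)^k=e^{-ik\pi/3}.$$

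Substituting these into the filter produces the $r=0$ identity directly from Euler's formula $e^{i\theta}+e^{-i\theta}=2\cos\theta$. For $r\in\{1,2\}$, the phase factors $\omega^{-rt}=e^{-2\pi irt/3}$ combine with $e^{\pm ik\pi/3}$; after reducing $\omega^{-2r}$ modulo $1$ to $e^{2\pi ir/3}$, the two non-trivial exponentials become $e^{\pm i(k-2r)\pi/3}$, whose sum is $2\cos((k-2r)\pi/3)$. Specializing to $r=1$ and $r=2$ recovers the stated shifts $(k-2)\pi/3$ and $(k-4)\pi/3$, respectively.

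No serious obstacle arises here --- this is a standard application of the roots of unity filter --- but some care is needed in the sign bookkeeping of $\omega^{-rt}$ across the $t=1$ and $t=2$ summands, so that the two complex exponentials collapse cleanly into a single real cosine with the correct phase shift.
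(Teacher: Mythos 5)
Your argument is correct: the roots-of-unity filter gives $\sum_{j\equiv r\;(3)}\binom{k}{j}=\frac{1}{3}\sum_{t=0}^{2}\omega^{-rt}(1+\omega^t)^k$, and since $1+\omega=e^{i\pi/3}$ and $1+\omega^2=e^{-i\pi/3}$, the two nontrivial terms pair into $2\cos\bigl(\tfrac{(k-2r)\pi}{3}\bigr)$, matching all three stated cases for $r=0,1,2$. Note that the paper does not prove this proposition at all --- it is quoted from Netto with a citation --- so there is no internal proof to compare against; your derivation is the standard one and is complete as written.
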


\begin{theorem}
\label{thm:number}
Consider the collection of \emph{all} 2-bridge knots of given crossing number $c$, counted exactly once or twice according to Assumption \ref{ass:model} and Remark \ref{rem:palindrome}, by considering every case of $\varepsilon_i\in\{1,2\}$ for $2\leq i \leq c-1$ and $\varepsilon_1=1=\varepsilon_c$ in Equations (\ref{eq:oddword}) and (\ref{eq:evenword}):
$$(+)^{\varepsilon_1}(-)^{\varepsilon_2}(+)^{\varepsilon_3}(-)^{\varepsilon_4}\ldots(-)^{\varepsilon_{c-1}}(+)^{\varepsilon_c} \text{ for $c$ odd and}$$
$$(+)^{\varepsilon_1}(-)^{\varepsilon_2}(+)^{\varepsilon_3}(-)^{\varepsilon_4}\ldots(+)^{\varepsilon_{c-1}}(-)^{\varepsilon_c} \text{ for $c$ even},$$
with reduced length $\ell=\sum_{i=1}^c \varepsilon_i \equiv 1 \mod 3$.

Then there are exactly $\frac{2^{c-2}+*}{3}$ (or approximately $\frac{2^{c-2}}{3}$) elements in this collection, where
\begin{equation*}
*=\begin{cases}
2\cos \hspace{5mm}\frac{(c-2)\pi}{3} & \text{if $c\equiv1$ mod $3$},\\
2\cos \frac{(c-4)\pi}{3} & \text{if $c\equiv0$ mod $3$, and}\\
2\cos \frac{(c-6)\pi}{3} & \text{if $c\equiv2$ mod $3$.}
\end{cases}
\end{equation*}
\end{theorem}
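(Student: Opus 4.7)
The plan is to parameterize each reduced billiard table word by the number of positions $i \in \{2,\ldots,c-1\}$ at which $\varepsilon_i = 2$ (a ``double'') as opposed to $\varepsilon_i = 1$ (a ``single''), and then compute the reduced length in terms of this count. Since $\varepsilon_1 = \varepsilon_c = 1$ are fixed and each of the $c-2$ interior exponents is free in $\{1,2\}$, the total number of unrestricted words is $2^{c-2}$. Let $d$ denote the number of interior doubles; then there are $\binom{c-2}{d}$ words with exactly $d$ doubles, and the reduced length is
\begin{equation*}
\ell \;=\; \varepsilon_1 + \varepsilon_c + \sum_{i=2}^{c-1}\varepsilon_i \;=\; 2 + \bigl(2d + (c-2-d)\bigr) \;=\; c + d.
\end{equation*}

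The congruence $\ell \equiv 1 \pmod 3$ therefore becomes $d \equiv 1 - c \pmod 3$, so the number of words in our collection equals
\begin{equation*}
\sum_{\substack{0 \leq d \leq c-2 \\ d \equiv 1-c \,(\mathrm{mod}\,3)}} \binom{c-2}{d}.
\end{equation*}
The next step is to split into three cases according to $c \bmod 3$ and apply Proposition \ref{prop:Net} with $k = c-2$. When $c \equiv 1 \pmod 3$, one needs $d \equiv 0 \pmod 3$, so the first Netto identity gives $\tfrac{1}{3}\bigl(2^{c-2} + 2\cos\tfrac{(c-2)\pi}{3}\bigr)$. When $c \equiv 0 \pmod 3$, one needs $d \equiv 1 \pmod 3$, so the second identity with $k-2 = c-4$ gives $\tfrac{1}{3}\bigl(2^{c-2} + 2\cos\tfrac{(c-4)\pi}{3}\bigr)$. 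When $c \equiv 2 \pmod 3$, one needs $d \equiv 2 \pmod 3$, so the third identity with $k-4 = c-6$ gives $\tfrac{1}{3}\bigl(2^{c-2} + 2\cos\tfrac{(c-6)\pi}{3}\bigr)$. These match the three cases in the definition of $*$.

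There is no serious obstacle here; the main care is bookkeeping the residue arithmetic (verifying in each case that the trigonometric offset produced by Netto's formula agrees with the one appearing in the statement) and recording that the approximation $\tfrac{2^{c-2}}{3}$ follows because $|*| \leq 2$ while $2^{c-2}$ grows exponentially in $c$.
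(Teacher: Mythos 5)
Your proof is correct and follows essentially the same route as the paper's: both decompose the collection by the number $d$ of interior doubles, observe that $\ell = c + d$ forces $d \equiv 1 - c \pmod 3$, and then sum $\binom{c-2}{d}$ over the admissible residue class via Proposition \ref{prop:Net}. Your version is slightly more explicit about the residue bookkeeping and the final approximation, but the argument is the same.
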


\begin{proof}
The total length of the word $\ell=\sum_{i=1}^c \varepsilon_i$ is equal to the crossing number $c$ plus the number of doubles (where $\varepsilon_i=2$).  Since the length must be congruent to 1 modulo 3: if $c$ is congruent to 0, then the number of doubles must be congruent to 1; if $c$ is congruent to 1, then the number of doubles must be congruent to 0; and if $c$ is congruent to 2, then the number of doubles must be congruent to 2.

Consider first the case where $c$ is congruent to 1.  One element from this collection is when 0 doubles are added.  We may also add 3 doubles, and these doubles must be selected from $2\leq i \leq c-1$, so there are $\binom{c-2}{3}$ elements.  We may also add 6 doubles for an additional $\binom{c-2}{6}$ elements, and so on.  Apply Proposition \ref{prop:Net}.  The other cases are similar.
\end{proof}

Note that this counting of doubles will reappear in the Main Theorem \ref{thm:main}.

\section{Counting the number of Seifert circles}
\label{sec:Seifert}

Seifert's algorithm first requires us to smooth the crossings of a diagram following their orientation.

\begin{definition}
\label{def:VH}
We say a crossing is \emph{vertically-smoothed} if its strands are both oriented upwards or both oriented downwards.  We denote such a crossing by $V$.

We say a crossing is \emph{horizontally-smoothed} if its strands are both oriented to the left or both oriented to the right.  We denote such a crossing by $H$.
\end{definition}

We adapt for this setting a more general result from the discussion of the Jones polynomials of 2-bridge knots.

\begin{proposition}
\label{prop:billiardwrithe}
(Cohen \cite[follows from Property 7.1]{Co:3bridge})  
In a billiard table diagram with $a=3$ and $n=3m+1$ crossings numbered from left to right, the orientations of the crossings are $H(VVH)^m$.

In a billiard table diagram with $a=3$ and $n=3m$ crossings numbered from left to right, the orientations of the crossings are $(VHV)^m$.  
\end{proposition}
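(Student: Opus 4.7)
My plan is to parametrize the trajectory explicitly, identify the three passages through each interior column, and classify each crossing by a short residue calculation. Set $b = n + 1$ and parametrize the ball's path by arc-length $t \in [0, 3b]$ so that its position is $(f_b(t), f_3(t))$, where $f_L$ denotes the tent wave of period $2L$ equal to $s$ on $[0, L]$ and to $2L - s$ on $[L, 2L]$. The horizontal sign $\varepsilon_x(t)$ equals $+1$ when $t \bmod 2b \in (0, b)$ and $-1$ when $t \bmod 2b \in (b, 2b)$, while the vertical sign $\varepsilon_y(t)$ depends only on $t \bmod 6$. For each interior column $x = k$ with $1 \le k \le b - 1$, a short check shows that the ball visits $x = k$ at exactly three times in $[0, 3b]$, namely $t_1 = k$, $t_2 = 2b - k$, and $t_3 = 2b + k$, with horizontal signs $+, -, +$ respectively; these correspond to the three unit cells the trajectory occupies in that column.

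Next I would determine which pair of these three passages realizes the actual crossing. The ball's height at time $t_i$ is $f_3(\tau_i)$ with $\tau_i = t_i \bmod 6$: writing $r = 2b \bmod 6$, we have $\tau_1 \equiv k$, $\tau_2 \equiv r - k$, and $\tau_3 \equiv r + k$ modulo $6$. Since $\gcd(3,b) = 1$, either $b \equiv 1 \pmod 3$ (so $r = 2$ and $n = 3m$) or $b \equiv 2 \pmod 3$ (so $r = 4$ and $n = 3m+1$). In each of the six residue classes of $k \bmod 6$, a direct check shows that exactly one of $f_3(\tau_1), f_3(\tau_2), f_3(\tau_3)$ equals $0$ or $3$ (the wall) while the other two coincide at the interior height $y_c \in \{1, 2\}$: these two are the strands of the crossing.

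The orientation then follows from the elementary observation that at a crossing of two strands of opposite slopes, the crossing is of type $H$ iff the two strands share their horizontal sign and of type $V$ iff they share their vertical sign. Among the three pairs from $\{t_1, t_2, t_3\}$, only $\{t_1, t_3\}$ has matching $\varepsilon_x$, so the crossing at column $k$ is of type $H$ precisely when $\{t_1, t_3\}$ is the realizing pair and $V$ otherwise. Tabulating across the six residue classes yields, for $b \equiv 1 \pmod 3$, the sequence $V, H, V$ as $k \bmod 3$ cycles through $1, 2, 0$, giving $(VHV)^m$; for $b \equiv 2 \pmod 3$, the same cycle yields $H, V, V$, giving $H(VVH)^m$.

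The only genuine obstacle is the bookkeeping in the second step, identifying which of the three strand passages lands on a wall in each residue class of $k \bmod 6$; once this is done, the $H/V$ pattern is forced. Since this is exactly the content of Property~7.1 of \cite{Co:3bridge} specialized to $a = 3$, one could alternatively cite that result directly, but the six-case verification outlined here is short enough to reproduce in full.
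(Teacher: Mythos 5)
Your argument is correct, and I verified the key computations: the three passages through the interior column $x=k$ occur at $t_1=k$, $t_2=2b-k$, $t_3=2b+k$ with horizontal signs $+,-,+$; exactly one of the three heights $f_3(\tau_i)$ lies on a wall because the conditions $k\equiv 0$, $k\equiv r$, $k\equiv -r \pmod 3$ partition the residues when $r=2b\bmod 6\in\{2,4\}$; the remaining two passages do coincide at an interior height (e.g.\ when $k\equiv r\pmod 3$ one checks $2k+r\equiv 0\pmod 6$, so the realizing pair is $\{t_1,t_3\}$ and the crossing is $H$); and the resulting rule ``$H$ iff $k\equiv r\pmod 3$'' reproduces $(VHV)^m$ for $b\equiv 1$ and $H(VVH)^m$ for $b\equiv 2\pmod 3$. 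Your route is, however, genuinely different from the paper's: the paper offers no proof at all here, instead citing Property~7.1 of \cite{Co:3bridge} and pointing to Figure~\ref{fig:HVVH}, whose caption records the one conceptual observation that the $H/V$ pattern is independent of the billiard table word $w$ (orientations ignore over/under information). Your explicit tent-wave parametrization supplies the self-contained verification that the citation elides, at the cost of the residue bookkeeping; the paper's approach buys brevity and leans on the figure's visible period-$3$ structure. One small simplification available to you: since the wall/interior classification and the realizing pair depend only on $k\bmod 3$ (not $k\bmod 6$), three cases suffice rather than six.
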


We will only be concerned with the first case following Assumption \ref{ass:model}.  One can see this result immediately in Figure \ref{fig:HVVH}.

\begin{figure}[h]
\begin{center}
\begin{tikzpicture}[scale=.75]
    \foreach \i in {0,...,8} {
        \draw [very thin,dashed,gray] (\i,1) -- (\i,4);
    }
    \foreach \i in {1,...,4} {
        \draw [very thin,dashed,gray] (0,\i) -- (8,\i);
    }
\draw[thick](-.25,.75) -- (3,4) -- (6,1) -- (8,3) -- (7,4) -- (4,1) -- (1,4) -- (0,3) -- (2,1) -- (5,4) -- (8,1);
    \foreach \i / \j in {0/1,3/2,6/1} {
        \draw [thick, ->] (\i+.5,\j+.5) -- (\i+1.5,\j+1.5);
        \draw [thick, ->] (\i+.5,\j+1.5) -- (\i+1.5,\j+.5);
    }
    \foreach \i / \j in {1/2,2/1} {
        \draw [thick, ->] (\i+.5,\j+.5) -- (\i+1.5,\j+1.5);
        \draw [thick, <-] (\i+.5,\j+1.5) -- (\i+1.5,\j+.5);
    }
    \foreach \i / \j in {4/1,5/2} {
        \draw [thick, <-] (\i+.5,\j+.5) -- (\i+1.5,\j+1.5);
        \draw [thick, ->] (\i+.5,\j+1.5) -- (\i+1.5,\j+.5);
    }
    \foreach \i in {9,...,11} {
        \draw [very thin,dashed,lightred] (\i,1) -- (\i,4);
    }
    \foreach \i in {1,...,4} {
        \draw [very thin,dashed,lightred] (8,\i) -- (11,\i);
    }
\draw[thick, red] (8,1) -- (11.25, 4.25);
\draw[thick, red] (8,3) -- (9,4) -- (11,2) -- (10,1) -- (8,3);
    \foreach \i / \j in {9/2} {
        \draw [thick, red, ->] (\i+.5,\j+.5) -- (\i+1.5,\j+1.5);
        \draw [thick, red, ->] (\i+.5,\j+1.5) -- (\i+1.5,\j+.5);
    }
    \foreach \i / \j in {7/2} {
        \draw [thick, red, ->] (\i+1,\j+1) -- (\i+1.5,\j+1.5);
        \draw [thick, <-] (\i+.5,\j+1.5) -- (\i+1,\j+1);
		}
    \foreach \i / \j in {8/1} {
        \draw [thick, red, ->] (\i+.5,\j+.5) -- (\i+1.5,\j+1.5);
        \draw [thick, red, <-] (\i+.5,\j+1.5) -- (\i+1.5,\j+.5);
    }
\end{tikzpicture}
\caption{\label{fig:HVVH} The pattern for horizontal and vertical orientations for $n\equiv 1 \mod 3$ is not dependent on the billiard table word $w$.}
\end{center}
\end{figure}
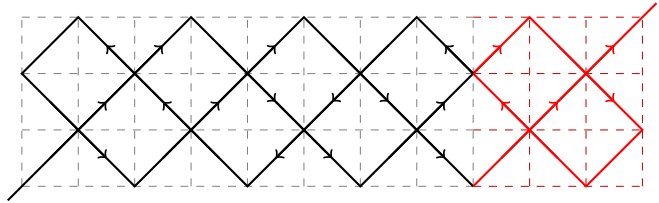

We next translate this result to our new setting of the alternating diagram obtained from the reduced billiard table word as in Theorem \ref{thm:alt}.

\begin{lemma}
\label{lem:contributions}
If either a single $+$ or a single $-$ appears in any position in the reduced billiard table word $w$ that is congruent to 1 modulo 3, then it corresponds to a single crossing in the alternating diagram $D$ that is horizontally-smoothed.

If either a double $++$ or a double $-{}-$ appears in any two positions in the reduced billiard table word $w$ that are congruent to 2 and 3 modulo 3, then they correspond to a single crossing in the alternating diagram $D$ that is horizontally-smoothed.

The remaining cases correspond to crossings in $D$ that are vertically-smoothed.
\end{lemma}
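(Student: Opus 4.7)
The plan is to track the orientation of each crossing as we pass from the reduced billiard table diagram $\widetilde{D}$ to the alternating diagram $D$ via the transformation of Theorem \ref{thm:alt}. By Proposition \ref{prop:billiardwrithe}, in our setting ($a = 3$ and $\ell = 3m+1$) the crossings of $\widetilde{D}$ already follow the pattern $H(VVH)^m$, so column $p$ of $\widetilde{D}$ carries an $H$-smoothing exactly when $p \equiv 1 \pmod{3}$ and a $V$-smoothing otherwise. Since the transformation from $\widetilde{D}$ to $D$ only modifies doubles, via the local $180^\circ$ rotation of Figure \ref{fig:two-to-one}, I will treat the singles claim and the doubles claim separately.

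\textbf{The singles case.} A single run at position $p$ of $w$ corresponds to a solitary crossing at column $p$ of $\widetilde{D}$ that is untouched by any combining move. Because the $180^\circ$ rotation is a local isotopy, it leaves strand orientations at crossings outside its support unchanged. Hence the crossing persists into $D$ carrying the same orientation it had in $\widetilde{D}$, which is $H$ if and only if $p \equiv 1 \pmod{3}$. This yields the first assertion of the lemma, and it also shows that a single at $p \not\equiv 1 \pmod{3}$ inherits $V$ from $\widetilde{D}$, contributing to the ``remaining cases'' clause.

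\textbf{The doubles case and the main obstacle.} A double at positions $p, p+1$ of $w$ fuses the two crossings at columns $p, p+1$ of $\widetilde{D}$, together with the bounce between them, into a single crossing of $D$. I will perform a case analysis on the residue of $p$ modulo $3$, which yields three local orientation patterns in $\widetilde{D}$: $(H,V)$ when $p \equiv 1$, $(V,V)$ when $p \equiv 2$, and $(V,H)$ when $p \equiv 0$. The delicate step is the $(V,V)$ case, where both crossings are vertical and the bounce between them lies on the top or bottom edge of the billiard table: I will label the four endpoints of the local region with strand directions and verify, after the half-twist of Figure \ref{fig:two-to-one}, that the combined single crossing has both strands oriented horizontally, giving $H$. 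The mixed cases $(H,V)$ and $(V,H)$ follow from the same diagrammatic bookkeeping, where the bounce now sits between one horizontal and one vertical segment and the rotation yields a combined crossing with both strands vertical, giving $V$. The hard part throughout is the careful orientation bookkeeping in the $(V,V)\to H$ case, where all four strand endpoints must be tracked through the $180^\circ$ rotation without ambiguity; once this is established, the remaining doubles cases close out the ``remaining cases'' clause, since every run of $w$ is either a single or a double.
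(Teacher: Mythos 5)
Your proposal is correct and follows essentially the same route as the paper's proof: the singles claim is read off directly from Proposition \ref{prop:billiardwrithe}, and the doubles claim is handled by an orientation case analysis of the two fused crossings under the $180^\circ$ rotation of Figure \ref{fig:two-to-one}, reducing to $VV\to H$ and $VH,HV\to V$. The only cosmetic difference is that you index the three cases by the starting position modulo $3$, whereas the paper indexes them by which of the three arcs is the leftward-oriented one (Figure \ref{fig:orientedDouble}); these enumerations are equivalent.
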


\begin{proof}
The first statement on a single $+$ or a single $-$ is easy to see based on Proposition \ref{prop:billiardwrithe}.

To show the second statement, we consider all of the possible cases for a double $++$ and a double $-{}-$.  First we remind the reader that a double $++$ must occur as $\sigma_1\sigma_2$ and a double $-{}-$ must occur as $\sigma_2^{-1}\sigma_1^{-1}$ as in the proof of Theorem \ref{thm:alt}, reducing the number of cases.  Note that both of these cases involve an ``overstrand'' passing over the two consecutive crossings.

Recall that our long knot starts on the left, moves to the right, backtracks to the left, and then returns to the right.  So if this overstrand is oriented to the left, then both of the other arcs must be oriented to the right.  This occurs in the third row of Figure \ref{fig:orientedDouble}.  If this overstrand is oriented to the right, then one of the other arcs must be oriented to the right and the other to the left.  These two cases occur in the first and second rows of Figure \ref{fig:orientedDouble}.

In the first row a $VH$ becomes a $V$ for both the $++$ and $-{}-$ cases.  In the second row an $HV$ becomes a $V$ for both cases.  In the third row a $VV$ becomes an $H$ for both cases.  These cases are summarized in Table \ref{tab:fig:orientedDouble}.  Lastly we note that $VV$ only occurs in positions congruent to 2 and 3 modulo 3 by Proposition \ref{prop:billiardwrithe}.
\end{proof}

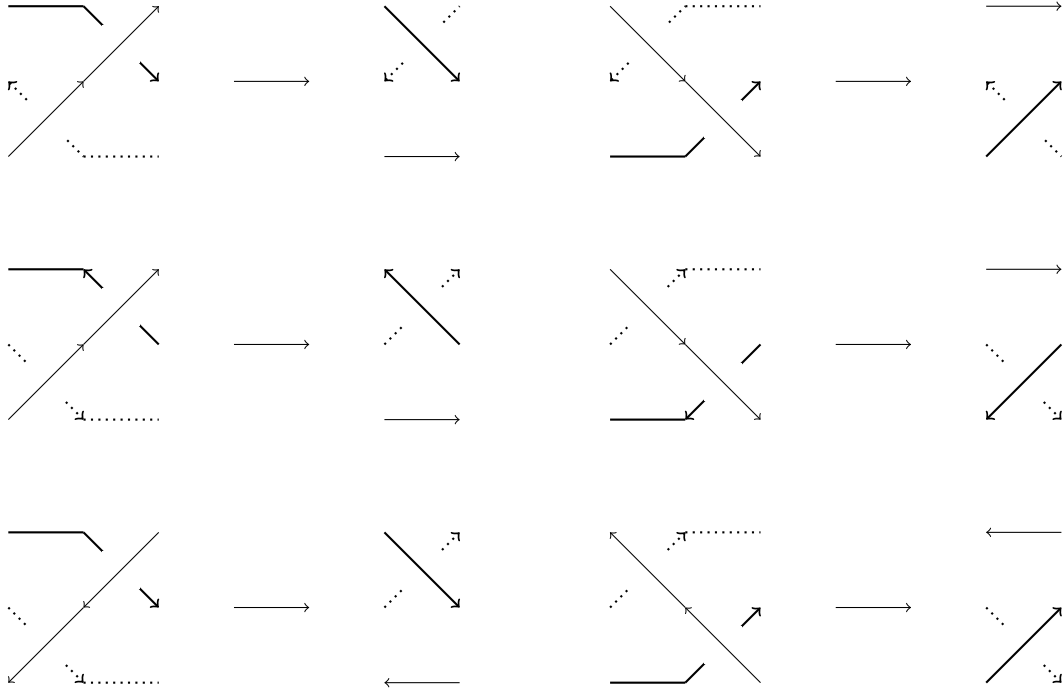
\begin{figure}[h]
\begin{center}
\begin{tikzpicture}[scale=1]

\draw[thick, dotted, ->] (0,1) -- (1,0);
\draw[thick, ->] (1,2) -- (2,1);
\fill[white] (.25,.25) rectangle (.75,.75);
\fill[white] (1.25,1.25) rectangle (1.75,1.75);
\draw[<-] (0,0) -- (1,1);
\draw[<-] (1,1) -- (2,2);
\draw[thick, dotted, -] (1,0) -- (2,0);
\draw[thick, -] (0,2) -- (1,2);

\draw[->] (3,1) -- (4,1);

\draw[thick, dotted, ->] (5,1) -- (6,2);
\fill[white] (5.25,1.25) rectangle (5.75,1.75);
\draw[thick, ->] (5,2) -- (6,1);
\draw[<-] (5,0) -- (6,0);

\begin{scope}[shift={(0,3.5)}]
\draw[thick, dotted, ->] (0,1) -- (1,0);
\draw[thick, <-] (1,2) -- (2,1);
\fill[white] (.25,.25) rectangle (.75,.75);
\fill[white] (1.25,1.25) rectangle (1.75,1.75);
\draw[->] (0,0) -- (1,1);
\draw[->] (1,1) -- (2,2);
\draw[thick, dotted, -] (1,0) -- (2,0);
\draw[thick, -] (0,2) -- (1,2);

\draw[->] (3,1) -- (4,1);

\draw[thick, dotted, ->] (5,1) -- (6,2);
\fill[white] (5.25,1.25) rectangle (5.75,1.75);
\draw[thick, <-] (5,2) -- (6,1);
\draw[->] (5,0) -- (6,0);
\end{scope}

\begin{scope}[shift={(0,7)}]
\draw[thick, dotted, <-] (0,1) -- (1,0);
\draw[thick, ->] (1,2) -- (2,1);
\fill[white] (.25,.25) rectangle (.75,.75);
\fill[white] (1.25,1.25) rectangle (1.75,1.75);
\draw[->] (0,0) -- (1,1);
\draw[->] (1,1) -- (2,2);
\draw[thick, dotted, -] (1,0) -- (2,0);
\draw[thick, -] (0,2) -- (1,2);

\draw[->] (3,1) -- (4,1);

\draw[thick, dotted, <-] (5,1) -- (6,2);
\fill[white] (5.25,1.25) rectangle (5.75,1.75);
\draw[thick, ->] (5,2) -- (6,1);
\draw[->] (5,0) -- (6,0);
\end{scope}

\begin{scope}[shift={(8,0)}]
\draw[thick, dotted, ->] (0,1) -- (1,2);
\draw[thick, ->] (1,0) -- (2,1);
\fill[white] (.25,1.25) rectangle (.75,1.75);
\fill[white] (1.25,.25) rectangle (1.75,.75);
\draw[<-] (0,2) -- (1,1);
\draw[<-] (1,1) -- (2,0);
\draw[thick, dotted, -] (1,2) -- (2,2);
\draw[thick, -] (0,0) -- (1,0);

\draw[->] (3,1) -- (4,1);

\draw[thick, dotted, ->] (5,1) -- (6,0);
\fill[white] (5.25,.25) rectangle (5.75,.75);
\draw[thick, ->] (5,0) -- (6,1);
\draw[<-] (5,2) -- (6,2);
\end{scope}

\begin{scope}[shift={(8,3.5)}]
\draw[thick, dotted, ->] (0,1) -- (1,2);
\draw[thick, <-] (1,0) -- (2,1);
\fill[white] (.25,1.25) rectangle (.75,1.75);
\fill[white] (1.25,.25) rectangle (1.75,.75);
\draw[->] (0,2) -- (1,1);
\draw[->] (1,1) -- (2,0);
\draw[thick, dotted, -] (1,2) -- (2,2);
\draw[thick, -] (0,0) -- (1,0);

\draw[->] (3,1) -- (4,1);

\draw[thick, dotted, ->] (5,1) -- (6,0);
\fill[white] (5.25,.25) rectangle (5.75,.75);
\draw[thick, <-] (5,0) -- (6,1);
\draw[->] (5,2) -- (6,2);
\end{scope}

\begin{scope}[shift={(8,7)}]
\draw[thick, dotted, <-] (0,1) -- (1,2);
\draw[thick, ->] (1,0) -- (2,1);
\fill[white] (.25,1.25) rectangle (.75,1.75);
\fill[white] (1.25,.25) rectangle (1.75,.75);
\draw[->] (0,2) -- (1,1);
\draw[->] (1,1) -- (2,0);
\draw[thick, dotted, -] (1,2) -- (2,2);
\draw[thick, -] (0,0) -- (1,0);

\draw[->] (3,1) -- (4,1);

\draw[thick, dotted, <-] (5,1) -- (6,0);
\fill[white] (5.25,.25) rectangle (5.75,.75);
\draw[thick, ->] (5,0) -- (6,1);
\draw[->] (5,2) -- (6,2);
\end{scope}

\end{tikzpicture}
\caption{\label{fig:orientedDouble} The only possible configurations of pairs of oriented crossings in a reduced billiard table diagram $\widetilde{D}$ obtained from doubles $++$ or $-{}-$ in the associated reduced billiard table word $w$, together with the corresponding single crossing in the alternating diagram $D$, as in the proof of Lemmma \ref{lem:contributions}.}
\end{center}
\end{figure}

\renewcommand{\arraystretch}{2}
\begin{table}[h]
\begin{center}
\begin{tabular}{|ccc|ccc|}
\hline
$++$ & $\rightarrow$ & $\sigma_2^{-1}$ & $-{}-$ & $\rightarrow$ & $\sigma_1$ \\
\hline
VH & $\rightarrow$ & V & VH & $\rightarrow$ & V \\
HV & $\rightarrow$ & V & HV & $\rightarrow$ & V \\
VV & $\rightarrow$ & H & VV & $\rightarrow$ & H \\
\hline
\end{tabular}
\caption{A summary of the cases associated with Figure \ref{fig:orientedDouble} in the proof of Lemma \ref{lem:contributions}.}
\label{tab:fig:orientedDouble}
\end{center}
\end{table}

We now use these vertically--smoothed crossings to count the number of Seifert circles obtained from the alternating diagram $D$.

\begin{definition}
\label{def:vert}
Consider the alternating diagram $D$ obtained from a reduced billiard table word $w$ as described in Section \ref{sec:model}.

We say a vertically-smoothed crossing is \emph{viable} if the next vertically-smoothed crossing appears at the same height or if it is the last vertically-smoothed crossing.

We say a vertically-smoothed crossing is \emph{sequential} if it is not the last vertically-smoothed crossing and if the immediate next crossing is also vertically-smoothed and appears at the same height.
\end{definition}

\begin{theorem}
\label{thm:Seifert}
The number of Seifert circles in the alternating diagram $D$ obtained from a reduced billard table word $w$ is two more than the number of viable vertically-smoothed crossings.
\end{theorem}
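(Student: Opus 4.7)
The plan is to count Seifert circles in $D$ by direct tracing, exploiting the explicit $V$/$H$ smoothing assignment provided by Lemma \ref{lem:contributions}. The argument has three main stages.

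First, I would establish the base case where $D$ has no vertically-smoothed crossings. Each $H$ smoothing preserves the horizontal flow at its crossing, so the Seifert-smoothed diagram follows the $3$-braid-with-plat-closure structure of $D$ and decomposes into exactly two Seifert circles: one traces the long bottom strand together with its closing arc, and the other traces the two upper strands joined through the plat-type arcs on both sides of the diagram. This establishes the base contribution of two in the formula.

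Second, I would show that each viable $V$ crossing contributes exactly one additional Seifert circle. A $V$ smoothing locally introduces two short vertical arcs (on the left and right sides of the crossing) that act as U-turns in the horizontal traversal at the crossing's height. If two $V$ crossings are consecutive at the same height with only $H$-smoothed crossings between them, then the inward-facing U-turn arcs, together with the intermediate horizontal portions of the two strands at that height, bound a bigon that is itself a new Seifert circle. Similarly, the final $V$ crossing bounds an end-region on its right side, producing another new Seifert circle. These two situations correspond precisely to the viability condition of Definition \ref{def:vert}, so each viable $V$ crossing contributes one additional Seifert circle.

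The main obstacle is the third stage: verifying that non-viable $V$ crossings produce no additional Seifert circles. When a $V$ crossing is followed by a $V$ at a different height, the U-turn arcs of the two $V$'s lie at different heights, and the traversal connects them through the intermediate $H$-smoothed crossings into the same pre-existing Seifert circle, rather than bounding a new region. Proving this requires a careful case analysis of the possible $V$/$H$ patterns between two $V$ crossings at different heights, relying on the alternating structure of $D$ and on the precise smoothing assignment from Lemma \ref{lem:contributions} (together with Proposition \ref{prop:billiardwrithe} for the orientations in $\widetilde{D}$). Combining the three stages yields the claimed count $s = 2 + (\text{number of viable } V \text{ crossings})$.
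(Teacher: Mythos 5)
Your proposal is correct and follows essentially the same route as the paper: a base count of two circles when every crossing is horizontally smoothed, plus an inductive step showing that a $V$ crossing followed by a same-height $V$ crossing (and likewise the final $V$ crossing) each spawn a new circle while a height change does not. The ``careful case analysis'' you anticipate in your third stage largely evaporates once you apply your own first observation that an $H$-smoothing is the identity tangle: deleting all $H$-smoothed crossings reduces $D$ to a plat-closed sequence of $V$ crossings at two possible heights, and the paper then disposes of the height-change case with a single local picture (the second row of Figure \ref{fig:indstep}).
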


From this Lemma we can obtain an upper bound and a lower bound on the number of Seifert circles:

\begin{corollary}
\label{cor:upper}
The number of Seifert circles in the alternating diagram $D$ obtained from a reduced billard table word $w$ is \emph{at most} two more than the number of \emph{all} vertically-smoothed crossings.
\end{corollary}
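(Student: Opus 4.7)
The plan is to derive this inequality as a nearly immediate consequence of Theorem \ref{thm:Seifert}. My first step is to invoke that theorem to equate the number of Seifert circles with exactly two more than the number of \emph{viable} vertically-smoothed crossings. Having done that, the work reduces to comparing the count of viable vertically-smoothed crossings with the count of all vertically-smoothed crossings.

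Next I would simply observe, from Definition \ref{def:vert}, that ``viable'' is a restriction placed on vertically-smoothed crossings: a vertically-smoothed crossing is viable only when the next vertically-smoothed crossing (if any) appears at the same height, or when it is the last one. Thus the set of viable vertically-smoothed crossings is a subset of the set of all vertically-smoothed crossings, and so its cardinality is no greater. Adding two to each side preserves the inequality, which gives the claimed upper bound.

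The proof is really a single-line deduction, so there is no substantive obstacle; all of the technical content lives in Theorem \ref{thm:Seifert} itself. The only thing worth being careful about is the wording of Definition \ref{def:vert}, to make sure the subset containment is genuinely immediate and does not require auxiliary hypotheses about the diagram $D$. Since viability is defined purely by a local condition on the list of vertically-smoothed crossings in $D$, without any further structural restriction, this containment is automatic and the corollary follows.
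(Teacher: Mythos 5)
Your proposal is correct and matches the paper's (implicit) argument exactly: the corollary is stated as an immediate consequence of Theorem \ref{thm:Seifert}, and since viable vertically-smoothed crossings form a subset of all vertically-smoothed crossings by Definition \ref{def:vert}, the equality $s = 2 + \#\{\text{viable}\}$ yields the bound $s \leq 2 + \#\{\text{all}\}$.
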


\begin{corollary}
\label{cor:lower}
The number of Seifert circles in the alternating diagram $D$ obtained from a reduced billard table word $w$ is \emph{at least} two more than the number of sequential vertically-smoothed crossings.
\end{corollary}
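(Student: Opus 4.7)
The plan is to deduce this corollary directly from Theorem \ref{thm:Seifert}, which equates the number of Seifert circles in $D$ with two plus the number of \emph{viable} vertically-smoothed crossings. Since the claimed bound is in terms of \emph{sequential} vertically-smoothed crossings, the entire task reduces to verifying the containment ``every sequential vertically-smoothed crossing is viable,'' whence the inequality $|\text{sequential}| \leq |\text{viable}|$ yields the desired lower bound of two plus the number of sequential crossings.

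To verify this containment, I would unpack Definition \ref{def:vert}. Suppose $c$ is a sequential vertically-smoothed crossing. Then by definition the immediate next crossing $c'$ in $D$ is itself vertically-smoothed and occurs at the same height as $c$. Since no crossing lies strictly between $c$ and $c'$, the crossing $c'$ is also the next vertically-smoothed crossing after $c$, and it sits at the same height as $c$. That is precisely the first clause in the definition of viability, so $c$ is viable. Combining with Theorem \ref{thm:Seifert} yields the statement.

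The step that requires the most care is simply keeping the two adjacency notions distinct: ``the immediate next crossing is vertically-smoothed at the same height'' (for sequential) versus ``the next vertically-smoothed crossing is at the same height'' (for viable). Once this distinction is made, the implication is one line, and there is no real obstacle to overcome. As a sanity check, I would note that the containment is generally strict, since a viable crossing may be the last vertically-smoothed one, or may have its next vertically-smoothed successor at the same height but separated from it by one or more horizontally-smoothed crossings; neither configuration is sequential. This already explains why the bound in Corollary \ref{cor:lower} is expected to be weaker than the exact count in Theorem \ref{thm:Seifert} and weaker in the opposite direction than the upper bound in Corollary \ref{cor:upper}.
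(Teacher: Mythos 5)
Your proposal is correct and matches the paper's (implicit) argument: the paper states Corollary \ref{cor:lower} as an immediate consequence of Theorem \ref{thm:Seifert}, and the only content needed is exactly the containment you verify, namely that every sequential vertically-smoothed crossing is viable, so the count of viable crossings dominates the count of sequential ones. Your unpacking of Definition \ref{def:vert} and your remark on why the containment is generally strict are both accurate.
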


\begin{remark}
\label{rem:goodbounds}
Observe that for every non-viable vertically-smoothed crossing, there is a viable one immediately following, and so the number of non-viable ones must be less than or equal to the number of viable ones.  Thus the total number of all vertically-smoothed crossings, which is the sum of the number of non-viable ones and the number of viable ones, must be less than or equal to twice the number of viable vertically-smoothed crossings.  This total number is also greater than or equal to the number of viable vertically-smoothed crossings.

This gives an idea of how good the upper bound of Corollary \ref{cor:upper} is.
\end{remark}

\medskip

Before we get to the proof of Theorem \ref{thm:Seifert}, we address the following small technical lemma that will make the main proof easier to follow.

\begin{lemma}
\label{lem:allH}
Suppose that one of our alternating diagrams $D$ has no vertically-smoothed crossings.  Then the number of crossings $c$ must be odd.

Furthermore, the first vertically-smoothed crossing, if it exists, must be a $\sigma_2^{-1}$.  If $c$ is odd, then the last vertically-smoothed crossing must be, as well; if $c$ is even, then the last must be a $\sigma_1$.
\end{lemma}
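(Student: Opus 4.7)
The plan is to combine the positional analysis from Lemma~\ref{lem:contributions} with the correspondence in Table~\ref{tab:wtoD}. Writing $p_i = 1 + \sum_{k<i}\varepsilon_k$ for the starting position in $w$ of the $i$-th run, Lemma~\ref{lem:contributions} says that a single run produces an H crossing in $D$ exactly when $p_i\equiv 1\pmod 3$, and a double run produces an H crossing exactly when $p_i\equiv 2\pmod 3$. Since singles advance the position by $1$ and doubles by $2$, a short bookkeeping argument shows that the only way to make every crossing H is to take $\varepsilon_1,\varepsilon_2,\varepsilon_3,\ldots = 1,2,1,2,\ldots$; combined with $\varepsilon_c=1$ this forces $c$ to be odd, proving the first assertion.

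For the first vertically-smoothed crossing, let $i$ be its index, so runs $1,\ldots,i-1$ obey the H-pattern above. Summing the $\varepsilon$-values along this alternating pattern gives $p_i\equiv 1\pmod 3$ when $i$ is odd and $p_i\equiv 2\pmod 3$ when $i$ is even. Breaking the H-pattern at run $i$ therefore forces $\varepsilon_i = 2$ in the odd case and $\varepsilon_i = 1$ in the even case. Since odd-indexed runs are $+$ and even-indexed runs are $-$ by Assumption~\ref{ass:model}, run $i$ is either $(+)^2$ or $(-)^1$; by Table~\ref{tab:wtoD} both yield $\sigma_2^{-1}$.

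The last vertically-smoothed crossing is handled symmetrically. If $j$ is its index, runs $j+1,\ldots,c$ satisfy the H-pattern, and reading from the right (using $\varepsilon_c=1$) gives that the tail values $\varepsilon_{c},\varepsilon_{c-1},\varepsilon_{c-2},\ldots$ alternate $1,2,1,2,\ldots$. Splitting into the two cases $c-j$ odd and $c-j$ even, one computes $p_{j+1}\pmod 3$ and thus $p_j\pmod 3$, which in turn forces $\varepsilon_j$ to be the unique value making run $j$ vertically-smoothed. Pairing this forced length with the sign of run $j$ (determined by $j\bmod 2$, and hence by the parity of $c$) identifies run $j$ via Table~\ref{tab:wtoD}: when $c$ is odd both subcases collapse to $\sigma_2^{-1}$, and when $c$ is even both collapse to $\sigma_1$. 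The main obstacle is the bookkeeping step where the length parity forced by the H-pattern on the tail must be checked to be compatible with the sign parity coming from the index $j$; verifying that the two subcases do indeed give the same crossing type in each of the parities of $c$ is the whole content of the second half of the lemma.
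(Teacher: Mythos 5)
Your argument is correct. For the first two assertions (no vertically-smoothed crossings forces $c$ odd, and the first vertically-smoothed crossing is $\sigma_2^{-1}$) your left-to-right forcing of $\varepsilon_1,\varepsilon_2,\varepsilon_3,\ldots=1,2,1,2,\ldots$ is essentially the paper's own argument, just phrased with the explicit congruences $p_i\bmod 3$ rather than by walking through positions $1,2,3,4,5,\ldots$ one at a time. Where you genuinely diverge is the final assertion about the \emph{last} vertically-smoothed crossing: the paper disposes of it by a symmetry trick --- reverse the reduced word (and take the mirror when $c$ is even so that the reversed word again begins with $+$), apply the already-established statement about the first vertically-smoothed crossing, and translate back --- whereas you run a second forcing argument anchored at the right end, using $\varepsilon_c=1$ together with $\ell\equiv 1\pmod 3$ (so that $p_c\equiv 1$ and run $c$ is automatically horizontal) to force the tail pattern $1,2,1,2,\ldots$ read from the right, and then check the four subcases indexed by the parities of $c$ and of $c-j$. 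I verified your subcases: for $c$ odd they give $(+)^2$ or $(-)^1$, both $\sigma_2^{-1}$; for $c$ even they give $(-)^2$ or $(+)^1$, both $\sigma_1$. Your route costs more bookkeeping but is self-contained, and it sidesteps the point the paper leaves implicit, namely that reversal (and mirroring) of the word is compatible with the run-to-crossing correspondence of Table \ref{tab:wtoD}. Two small things worth making explicit in a final write-up: the anchor $p_c=\ell\equiv1\pmod 3$ is what both starts the backward induction and rules out the degenerate case $j=c$; and the phrase ``thus $p_j\pmod 3$'' should be unpacked as ``for each candidate value of $\varepsilon_j$, compute $p_j=p_{j+1}-\varepsilon_j$ and observe that exactly one candidate violates the horizontal condition of Lemma \ref{lem:contributions}.''
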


\begin{proof}
The first crossing is always $\sigma_1$ arising from a single $+$ appearing in position 1 (and hence 1 modulo 3) so that it is horizontally-smoothed.

If the second crossing was a $\sigma_2^{-1}$ arising from a single $-$ appearing in position 2 (and hence 2 modulo 3), it would be vertically-smoothed by Lemma \ref{lem:contributions}, so instead it must be a $\sigma_1$ arising from a double $-{}-$ appearing in positions 2 and 3, and it is horizontally-smoothed.

If the third crossing was a $\sigma_2^{-1}$ arising from a double $++$ appearing in positions 4 and 5, it would be vertically-smoothed by Lemma \ref{lem:contributions}, so instead it must be a $\sigma_1$ arising from a single $+$ appearing in position 4, and it is horizontally-smoothed.

For position 5, this brings us back to the argument for the second crossing, and so we must have $+$ and then $-{}-$ repeating.  By Assumption \ref{ass:model}, the reduced word must end with a single, so the word must be $+(-{}-+)^m$ for some positive integer $m$.  Then the alternating knot must be $\sigma_1^{2m+1}$, and so it must have an odd number of crossings.

These exact arguments also show the first vertically-smoothed crossing must be a $\sigma_2^{-1}$.

To see the final claims, consider the reverse of the reduced word.  If $c$ is odd, then the reverse also begins with $+$, and its alternating diagram must have a $\sigma_2^{-1}$ as its first vertically-smoothed crossing, so the original reduced word must have a $\sigma_2^{-1}$ as its last vertically-smoothed crossing.  If $c$ is even, then the reverse begins with a $-$ and so the mirror must be taken, and its alternating diagram must have a $\sigma_2^{-1}$ as its first vertically-smoothed crossing, so the original reduced word must have a $\sigma_1$ as its last vertically-smoothed crossing.
\end{proof}

\begin{proof}[Proof of Theorem \ref{thm:Seifert}]
First observe that a horizontal smoothing acts like the identity tangle in a braid.  Let us then ignore these crossings from our discussion below so that we have only vertically-smoothed crossings. 

Suppose there are no vertically-smoothed crossings.  Then by Lemma \ref{lem:allH}, the crossing number of the alternating diagram $D$ must be odd, and so this gives two circles: one from the plat closure between the strands labeled two and three, and one from the ``long'' component of the long knot.

Suppose now there \emph{are} vertically-smoothed crossings, and let's consider the base cases determined by whether $c$ is odd or even.  By Lemma \ref{lem:allH}, if $c$ is odd, the first and the last vertically-smoothed crossings are a $\sigma_2^{-1}$.  If these are the same crossing, this gives the case shown in the first row of Figure \ref{fig:basecases}, where a smoothing yields three circles, satisfying the conclusion of the Theorem.  If these are \emph{not} the same crossing, this gives the case shown in the second row of Figure \ref{fig:basecases}, where a smoothing yields four circles, satisfying the conclusion of the Theorem.  By Lemma \ref{lem:allH}, if $c$ is even, the first vertically-smoothed crossing is a $\sigma_2^{-1}$ and the last is a $\sigma_1$.  That gives the case shown in the third row of Figure \ref{fig:basecases}, where a smoothing yields three circles, satisfying the conclusion of the Theorem since the first crossing is not viable.

\begin{figure}[h!]
\begin{center}
\begin{tikzpicture}[scale=.5]

\begin{scope}[shift={(0,7)}]
\draw[-] (-1,0) -- (6,0);
\draw[-] (0,1) -- (5,1);
\draw[-] (0,2) -- (5,2);
\draw[-] (0,2) arc (90:270:.5);
\draw[-] (5,1) arc (-90:90:.5);
\foreach \x/ \y in {2/1}
    {
    \draw[-, dotted, thick] (\x,\y) .. controls (\x+.33,\y+.33) and (\x+.33,\y+.66) .. (\x,\y+1);
    \draw[-, dotted, thick] (\x+1,\y) .. controls (\x+.66,\y+.33) and (\x+.66,\y+.66) .. (\x+1,\y+1);
    }
\draw[->] (7,1) -- (8,1);
\end{scope}

\begin{scope}[shift={(10,7)}]
\draw[-] (-1,0) -- (6,0);
\draw[-] (0,1) -- (2,1);
\draw[-] (3,1) -- (5,1);
\draw[-] (0,2) -- (2,2);
\draw[-] (3,2) -- (5,2);
\draw[-] (0,2) arc (90:270:.5);
\draw[-] (5,1) arc (-90:90:.5);
\foreach \x/ \y in {2/1}
    {
    \draw[-] (\x,\y) .. controls (\x+.33,\y+.33) and (\x+.33,\y+.66) .. (\x,\y+1);
    \draw[-] (\x+1,\y) .. controls (\x+.66,\y+.33) and (\x+.66,\y+.66) .. (\x+1,\y+1);
    }
\foreach \x/ \y in {2/1}
	{
\draw[black, ultra thick] (\x+1,\y) .. controls (\x+.66,\y+.33) and (\x+.66,\y+.66) .. (\x+1,\y+1);
	}
\draw[black, ultra thick] (-1,0) -- (0,0);
\draw[black, ultra thick] (0,2) arc (90:270:.5);
\end{scope}

\begin{scope}[shift={(0,3.5)}]
\draw[-] (-1,0) -- (6,0);
\draw[-] (0,1) -- (5,1);
\draw[-] (0,2) -- (5,2);
\draw[-] (0,2) arc (90:270:.5);
\draw[-] (5,1) arc (-90:90:.5);
\foreach \x/ \y in {1/1, 3/1}
    {
    \draw[-, dotted, thick] (\x,\y) .. controls (\x+.33,\y+.33) and (\x+.33,\y+.66) .. (\x,\y+1);
    \draw[-, dotted, thick] (\x+1,\y) .. controls (\x+.66,\y+.33) and (\x+.66,\y+.66) .. (\x+1,\y+1);
    }
\draw[->] (7,1) -- (8,1);
\end{scope}

\begin{scope}[shift={(10,3.5)}]
\draw[-] (-1,0) -- (6,0);
\draw[-] (0,1) -- (1,1);
\draw[-] (2,1) -- (3,1);
\draw[-] (4,1) -- (5,1);
\draw[-] (0,2) -- (1,2);
\draw[-] (2,2) -- (3,2);
\draw[-] (4,2) -- (5,2);
\draw[-] (0,2) arc (90:270:.5);
\draw[-] (5,1) arc (-90:90:.5);
\foreach \x/ \y in {1/1, 3/1}
    {
    \draw[-] (\x,\y) .. controls (\x+.33,\y+.33) and (\x+.33,\y+.66) .. (\x,\y+1);
    \draw[-] (\x+1,\y) .. controls (\x+.66,\y+.33) and (\x+.66,\y+.66) .. (\x+1,\y+1);
    }
\foreach \x/ \y in {1/1, 3/1}
	{
\draw[black, ultra thick] (\x+1,\y) .. controls (\x+.66,\y+.33) and (\x+.66,\y+.66) .. (\x+1,\y+1);
	}
\draw[black, ultra thick] (-1,0) -- (0,0);
\draw[black, ultra thick] (0,2) arc (90:270:.5);
\end{scope}

\begin{scope}[shift={(0,0)}]
\draw[-] (-1,0) -- (5,0);
\draw[-] (0,1) -- (5,1);
\draw[-] (0,2) -- (6,2);
\draw[-] (0,2) arc (90:270:.5);
\draw[-] (5,0) arc (-90:90:.5);
\foreach \x/ \y in {1/1, 3/0}
    {
    \draw[-, dotted, thick] (\x,\y) .. controls (\x+.33,\y+.33) and (\x+.33,\y+.66) .. (\x,\y+1);
    \draw[-, dotted, thick] (\x+1,\y) .. controls (\x+.66,\y+.33) and (\x+.66,\y+.66) .. (\x+1,\y+1);
    }

\draw[->] (7,1) -- (8,1);
\end{scope}

\begin{scope}[shift={(10,0)}]
\draw[-] (-1,0) -- (3,0);
\draw[-] (4,0) -- (5,0);
\draw[-] (0,1) -- (1,1);
\draw[-] (2,1) -- (3,1);
\draw[-] (4,1) -- (5,1);
\draw[-] (0,2) -- (1,2);
\draw[-] (2,2) -- (6,2);
\draw[-] (0,2) arc (90:270:.5);
\draw[-] (5,0) arc (-90:90:.5);
\foreach \x/ \y in {1/1, 3/0}
    {
    \draw[-] (\x,\y) .. controls (\x+.33,\y+.33) and (\x+.33,\y+.66) .. (\x,\y+1);
    \draw[-] (\x+1,\y) .. controls (\x+.66,\y+.33) and (\x+.66,\y+.66) .. (\x+1,\y+1);
    }
\foreach \x/ \y in {3/0}
	{
\draw[black, ultra thick] (\x+1,\y) .. controls (\x+.66,\y+.33) and (\x+.66,\y+.66) .. (\x+1,\y+1);
	}
\foreach \x/ \y in {1/1}
	{
\draw[lightgray, ultra thick] (\x+1,\y) .. controls (\x+.66,\y+.33) and (\x+.66,\y+.66) .. (\x+1,\y+1);
	}
\draw[black, ultra thick] (-1,0) -- (0,0);
\draw[black, ultra thick] (0,2) arc (90:270:.5);
\end{scope}

\end{tikzpicture}
\end{center}
\caption{Base cases for the proof of Theorem \ref{thm:Seifert}, where the bold indicates the start of Seifert circles.}
\label{fig:basecases}
\end{figure}
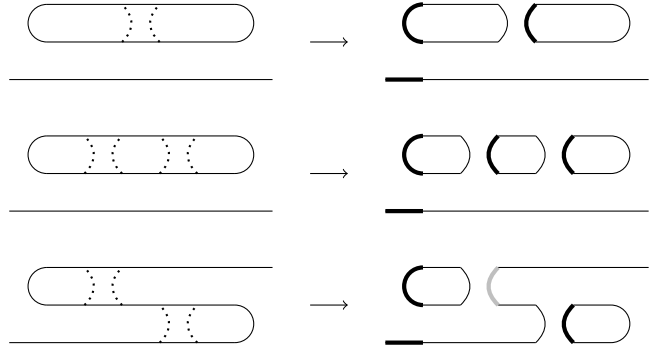

Next we consider the two cases of adding additional vertically-smoothed crossings:  whether the next crossing is at the same height as the previous one or whether it differs.

Suppose we have a vertically-smoothed crossing at a given height and that the next vertically-smoothed crossing is at the same height.  Then as in the first row of Figure \ref{fig:indstep}, we do indeed have an additional circle.

\begin{figure}[h!]
\begin{center}
\begin{tikzpicture}[scale=.5]

\begin{scope}[shift={(0,2.5)}]
\draw[-] (0,1) -- (1,1);
\draw[-] (2,1) -- (3,1);
\draw[-] (0,2) -- (1,2);
\draw[-] (2,2) -- (3,2);
\foreach \x/ \y in {1/1}
    {
    \draw[-] (\x,\y) .. controls (\x+.33,\y+.33) and (\x+.33,\y+.66) .. (\x,\y+1);
    \draw[-] (\x+1,\y) .. controls (\x+.66,\y+.33) and (\x+.66,\y+.66) .. (\x+1,\y+1);
    }
\foreach \x/ \y in {1/1}
	{
\draw[black, ultra thick] (\x+1,\y) .. controls (\x+.66,\y+.33) and (\x+.66,\y+.66) .. (\x+1,\y+1);
	}
\draw[->] (4,1.5) -- (5,1.5);
\end{scope}

\begin{scope}[shift={(6,2.5)}]
\draw[-] (0,1) -- (1,1);
\draw[-] (2,1) -- (3,1);
\draw[-] (4,1) -- (5,1);
\draw[-] (0,2) -- (1,2);
\draw[-] (2,2) -- (3,2);
\draw[-] (4,2) -- (5,2);
\foreach \x/ \y in {1/1, 3/1}
    {
    \draw[-] (\x,\y) .. controls (\x+.33,\y+.33) and (\x+.33,\y+.66) .. (\x,\y+1);
    \draw[-] (\x+1,\y) .. controls (\x+.66,\y+.33) and (\x+.66,\y+.66) .. (\x+1,\y+1);
    }
\foreach \x/ \y in {1/1, 3/1}
	{
\draw[black, ultra thick] (\x+1,\y) .. controls (\x+.66,\y+.33) and (\x+.66,\y+.66) .. (\x+1,\y+1);
	}
\end{scope}

\begin{scope}[shift={(0,0)}]
\draw[-] (0,0) -- (3,0);
\draw[-] (0,1) -- (1,1);
\draw[-] (2,1) -- (3,1);
\draw[-] (0,2) -- (1,2);
\draw[-] (2,2) -- (3,2);
\foreach \x/ \y in {1/1}
    {
    \draw[-] (\x,\y) .. controls (\x+.33,\y+.33) and (\x+.33,\y+.66) .. (\x,\y+1);
    \draw[-] (\x+1,\y) .. controls (\x+.66,\y+.33) and (\x+.66,\y+.66) .. (\x+1,\y+1);
    }
\foreach \x/ \y in {1/1}
	{
\draw[black, ultra thick] (\x+1,\y) .. controls (\x+.66,\y+.33) and (\x+.66,\y+.66) .. (\x+1,\y+1);
	}
\draw[->] (4,1) -- (5,1);
\end{scope}

\begin{scope}[shift={(6,0)}]
\draw[-] (0,0) -- (3,0);
\draw[-] (4,0) -- (5,0);
\draw[-] (0,1) -- (1,1);
\draw[-] (2,1) -- (3,1);
\draw[-] (4,1) -- (5,1);
\draw[-] (0,2) -- (1,2);
\draw[-] (2,2) -- (5,2);
\foreach \x/ \y in {1/1, 3/0}
    {
    \draw[-] (\x,\y) .. controls (\x+.33,\y+.33) and (\x+.33,\y+.66) .. (\x,\y+1);
    \draw[-] (\x+1,\y) .. controls (\x+.66,\y+.33) and (\x+.66,\y+.66) .. (\x+1,\y+1);
    }
\foreach \x/ \y in {3/0}
	{
\draw[black, ultra thick] (\x+1,\y) .. controls (\x+.66,\y+.33) and (\x+.66,\y+.66) .. (\x+1,\y+1);
	}
\foreach \x/ \y in {1/1}
	{
\draw[lightgray, ultra thick] (\x+1,\y) .. controls (\x+.66,\y+.33) and (\x+.66,\y+.66) .. (\x+1,\y+1);
	}
\end{scope}
\end{tikzpicture}
\end{center}
\caption{Inductive steps for the proof of Theorem \ref{thm:Seifert}, where the bold black indicates the start of Seifert circles and the bold gray indicates a vertically-smoothed crossing that is not viable.}
\label{fig:indstep}
\end{figure}
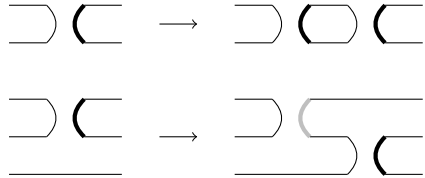

Suppose we have a vertically-smoothed crossing at a given height and that the next vertically-smoothed crossing is at the opposite height.  Then as in the second row of Figure \ref{fig:indstep}, we do \emph{not} have an additional circle, but as the first crossing is no longer viable, we have not added any additional viable vertically-smoothed crossings.
\end{proof}


\begin{example}
\label{ex:c6}
Let us consider the set of all knots obtained in this model for crossing number $c=6$.  Since $6\equiv 0$ modulo 3, we only consider adding $d\equiv 1$ modulo 3 doubles for $0\leq d \leq 6-2 = 4$, yielding $d=1$ or $d=4$.  The $\binom{4}{1}=4$ cases for $d=1$ and then the $\binom{4}{4}=1$ case for $d=4$ are listed in the first column of Table \ref{tab:c6}.  The knots are named in the final column.

In the second column, the reduced words are transformed into sigma notation (with a plat closure) for an alternating diagram following Theorem \ref{thm:alt}, and these alternating diagrams are presented in the third column.  The fourth column gives the oriented smoothing following Seifert's algorithm.

Marked in black in these columns are the letters or crossings or smoothings corresponding to viable vertical crossings.  Marked in gray are those corresponding to vertical crossings that are not viable.  One can see how each viable vertical crossing contributes a new circle and how each vertical non-viable crossing does not.

The actual average number $s$ of Seifert circles for $c=6$ is $2+\frac{9}{5}=\frac{19}{5}=3\frac{4}{5}$.

The upper bound on the average number of Seifert circles given by Theorem \ref{thm:Seifert} is $2+\frac{14}{5}=\frac{24}{5}=4\frac{4}{5}$.

By the genus formula for alternating knots, the actual average genus is $g(K)=1-\frac{1+s-c}{2}=\frac{1-s+c}{2}=\frac{1+c}{2}-\frac{s}{2}=\frac{7}{2}-\frac{19}{10}=\frac{16}{10}=1\frac{6}{10}$.

The lower bound on the average genus is $g(K)=\frac{7}{2}-\frac{24}{10}=\frac{11}{10}=1\frac{1}{10}$.
\end{example}


\begin{table}
\begin{tabular}{|l|c|c|c|c|}
\hline
Billiard Table &	Alternating Word 																														& Diagram $D$ & Seifert circles & Knot \\
\hspace{5mm} Word	$w$	 & & & & \hspace{1mm} $K$ \\
\hline
+-{}-+${\lightgray\Minus}\Plus$- 				& $\sigma_1^3\sigma_2^{-1}\sigma_1\sigma_2^{-1}$		& 
\begin{tikzpicture}[scale=.5]
\draw[->] (-1,0) -- (0,0);
\draw[->] (6,2) -- (7,2);
\draw[<-] (0,2) -- (3,2);
\draw[->] (3,0) -- (4,0);
\draw[->] (4,2) -- (5,2);
\draw[<-] (5,0) -- (6,0);
\draw[->] (0,2) arc (90:270:.5);
\draw[<-] (6,0) arc (-90:90:.5);
\foreach \x/ \y in {4/0}
    {
    \draw[->] (\x+1,\y) -- (\x,\y+1);
    \draw[color=white, line width=10] (\x,\y) -- (\x+1,\y+1);
    \draw[->] (\x,\y) -- (\x+1,\y+1);
    }
\foreach \x/ \y in {0/0, 1/0, 2/0}
    {
    \draw[<-] (\x+1,\y) -- (\x,\y+1);
    \draw[color=white, line width=10] (\x,\y) -- (\x+1,\y+1);
    \draw[->] (\x,\y) -- (\x+1,\y+1);
    }
\foreach \x/ \y in {3/1}
    {
    \draw[->] (\x,\y) -- (\x+1,\y+1);
    \draw[color=white, line width=10] (\x+1,\y) -- (\x,\y+1);
    \draw[->] (\x+1,\y) -- (\x,\y+1);
    }
\foreach \x/ \y in {5/1}
    {
    \draw[->] (\x,\y) -- (\x+1,\y+1);
    \draw[color=white, line width=10] (\x+1,\y) -- (\x,\y+1);
    \draw[<-] (\x+1,\y) -- (\x,\y+1);
    }
\foreach \x/ \y in {4/0}
	{
\filldraw[shift={(\x+.5,\y+.5)}] [fill=black, draw=black, rounded corners] (0,0) circle (5pt);
	}
\foreach \x/ \y in {3/1}
	{
\filldraw[shift={(\x+.5,\y+.5)}] [fill=lightgray, draw=lightgray, rounded corners] (0,0) circle (5pt);
	}
\end{tikzpicture}			&
\begin{tikzpicture}[scale=.5]
\draw[->] (-1,0) -- (0,0);
\draw[->] (6,2) -- (7,2);
\draw[->] (0,2) arc (90:270:.5);
\draw[<-] (6,0) arc (-90:90:.5);
\draw[<-] (0,2) -- (3,2);
\draw[->] (3,0) -- (4,0);
\draw[->] (4,2) -- (5,2);
\draw[<-] (5,0) -- (6,0);
\foreach \x/ \y in {3/1, 4/0}
    {
    \draw[->] (\x,\y) .. controls (\x+.33,\y+.33) and (\x+.33,\y+.66) .. (\x,\y+1);
    \draw[->] (\x+1,\y) .. controls (\x+.66,\y+.33) and (\x+.66,\y+.66) .. (\x+1,\y+1);
    }
\foreach \x/ \y in {0/0, 1/0, 2/0, 5/1}
    {
    \draw[->] (\x,\y) .. controls (\x+.33,\y+.33) and (\x+.66,\y+.33) .. (\x+1,\y);
    \draw[->] (\x,\y+1) .. controls (\x+.33,\y+.66) and (\x+.66,\y+.66) .. (\x+1,\y+1);
    }
\foreach \x/ \y in {4/0}
	{
\draw[black, ultra thick] (\x+1,\y) .. controls (\x+.66,\y+.33) and (\x+.66,\y+.66) .. (\x+1,\y+1);
	}
\foreach \x/ \y in {3/1}
	{
\draw[lightgray, ultra thick] (\x+1,\y) .. controls (\x+.66,\y+.33) and (\x+.66,\y+.66) .. (\x+1,\y+1);
	}
\draw[black, ultra thick] (-1,0) -- (0,0);
\draw[black, ultra thick] (0,2) arc (90:270:.5);
\end{tikzpicture}			&	$6_2$ \\
\hline
+$\Minus\Plus\Plus{\lightgray\Minus}\Plus$-	& $\sigma_1\sigma_2^{-3}\sigma_1\sigma_2^{-1}$					& 
\begin{tikzpicture}[scale=.5]
\draw[->] (-1,0) -- (0,0);
\draw[->] (6,2) -- (7,2);
\draw[->] (0,2) arc (90:270:.5);
\draw[<-] (6,0) arc (-90:90:.5);
\draw[<-] (0,2) -- (1,2);
\draw[->] (1,0) -- (4,0);
\draw[->] (4,2) -- (5,2);
\draw[<-] (5,0) -- (6,0);
\foreach \x/ \y in {4/0}
    {
    \draw[->] (\x+1,\y) -- (\x,\y+1);
    \draw[color=white, line width=10] (\x,\y) -- (\x+1,\y+1);
    \draw[->] (\x,\y) -- (\x+1,\y+1);
    }
\foreach \x/ \y in {0/0}
    {
    \draw[<-] (\x+1,\y) -- (\x,\y+1);
    \draw[color=white, line width=10] (\x,\y) -- (\x+1,\y+1);
    \draw[->] (\x,\y) -- (\x+1,\y+1);
    }
\foreach \x/ \y in {2/1}
    {
    \draw[<-] (\x,\y) -- (\x+1,\y+1);
    \draw[color=white, line width=10] (\x+1,\y) -- (\x,\y+1);
    \draw[<-] (\x+1,\y) -- (\x,\y+1);
    }
\foreach \x/ \y in {1/1, 3/1}
    {
    \draw[->] (\x,\y) -- (\x+1,\y+1);
    \draw[color=white, line width=10] (\x+1,\y) -- (\x,\y+1);
    \draw[->] (\x+1,\y) -- (\x,\y+1);
    }
\foreach \x/ \y in {5/1}
    {
    \draw[->] (\x,\y) -- (\x+1,\y+1);
    \draw[color=white, line width=10] (\x+1,\y) -- (\x,\y+1);
    \draw[<-] (\x+1,\y) -- (\x,\y+1);
    }
\foreach \x/ \y in {1/1, 2/1, 4/0}
	{
\filldraw[shift={(\x+.5,\y+.5)}] [fill=black, draw=black, rounded corners] (0,0) circle (5pt);
	}
\foreach \x/ \y in {3/1}
	{
\filldraw[shift={(\x+.5,\y+.5)}] [fill=lightgray, draw=lightgray, rounded corners] (0,0) circle (5pt);
	}
\end{tikzpicture}		&
\begin{tikzpicture}[scale=.5]
\draw[->] (-1,0) -- (0,0);
\draw[->] (6,2) -- (7,2);
\draw[->] (0,2) arc (90:270:.5);
\draw[<-] (6,0) arc (-90:90:.5);
\draw[<-] (0,2) -- (1,2);
\draw[->] (1,0) -- (4,0);
\draw[->] (4,2) -- (5,2);
\draw[<-] (5,0) -- (6,0);
\foreach \x/ \y in {1/1, 3/1, 4/0}
    {
    \draw[->] (\x,\y) .. controls (\x+.33,\y+.33) and (\x+.33,\y+.66) .. (\x,\y+1);
    \draw[->] (\x+1,\y) .. controls (\x+.66,\y+.33) and (\x+.66,\y+.66) .. (\x+1,\y+1);
    }
\foreach \x/ \y in {0/0, 5/1}
    {
    \draw[->] (\x,\y) .. controls (\x+.33,\y+.33) and (\x+.66,\y+.33) .. (\x+1,\y);
    \draw[->] (\x,\y+1) .. controls (\x+.33,\y+.66) and (\x+.66,\y+.66) .. (\x+1,\y+1);
    }
\foreach \x/ \y in {2/1}
    {
    \draw[<-] (\x,\y) .. controls (\x+.33,\y+.33) and (\x+.33,\y+.66) .. (\x,\y+1);
    \draw[<-] (\x+1,\y) .. controls (\x+.66,\y+.33) and (\x+.66,\y+.66) .. (\x+1,\y+1);
    }
\foreach \x/ \y in {1/1, 2/1, 4/0}
	{
\draw[black, ultra thick] (\x+1,\y) .. controls (\x+.66,\y+.33) and (\x+.66,\y+.66) .. (\x+1,\y+1);
	}
\foreach \x/ \y in {3/1}
	{
\draw[lightgray, ultra thick] (\x+1,\y) .. controls (\x+.66,\y+.33) and (\x+.66,\y+.66) .. (\x+1,\y+1);
	}
\draw[black, ultra thick] (-1,0) -- (0,0);
\draw[black, ultra thick] (0,2) arc (90:270:.5);
\end{tikzpicture}		& $6_1$ \\

\hline
+${\lightgray\Minus}\Plus\Minus\Minus\Plus$-	&	$\sigma_1\sigma_2^{-1}\sigma_1^3\sigma_2^{-1}$							& 
\begin{tikzpicture}[scale=.5]
\draw[->] (-1,0) -- (0,0);
\draw[->] (6,2) -- (7,2);
\draw[->] (0,2) arc (90:270:.5);
\draw[<-] (6,0) arc (-90:90:.5);
\draw[<-] (0,2) -- (1,2);
\draw[->] (1,0) -- (2,0);
\draw[->] (2,2) -- (5,2);
\draw[<-] (5,0) -- (6,0);
\foreach \x/ \y in {0/0, 5/1}
    {
    \draw[<-] (\x+1,\y) -- (\x,\y+1);
    \draw[color=white, line width=10] (\x,\y) -- (\x+1,\y+1);
    \draw[->] (\x,\y) -- (\x+1,\y+1);
    }
\foreach \x/ \y in {1/1, 2/0, 4/0}
    {
    \draw[->] (\x,\y) -- (\x+1,\y+1);
    \draw[color=white, line width=10] (\x+1,\y) -- (\x,\y+1);
    \draw[->] (\x+1,\y) -- (\x,\y+1);
    }
\foreach \x/ \y in {3/0}
    {
    \draw[<-] (\x,\y) -- (\x+1,\y+1);
    \draw[color=white, line width=10] (\x+1,\y) -- (\x,\y+1);
    \draw[<-] (\x+1,\y) -- (\x,\y+1);
    }
\foreach \x/ \y in {2/0, 3/0, 4/0}
	{
\filldraw[shift={(\x+.5,\y+.5)}] [fill=black, draw=black, rounded corners] (0,0) circle (5pt);
	}
\foreach \x/ \y in {1/1}
	{
\filldraw[shift={(\x+.5,\y+.5)}] [fill=lightgray, draw=lightgray, rounded corners] (0,0) circle (5pt);
	}
\end{tikzpicture}		&
\begin{tikzpicture}[scale=.5]
\draw[->] (-1,0) -- (0,0);
\draw[->] (6,2) -- (7,2);
\draw[->] (0,2) arc (90:270:.5);
\draw[<-] (6,0) arc (-90:90:.5);
\draw[<-] (0,2) -- (1,2);
\draw[->] (1,0) -- (2,0);
\draw[->] (2,2) -- (5,2);
\draw[<-] (5,0) -- (6,0);
\foreach \x/ \y in {0/0, 5/1}
    {
    \draw[->] (\x,\y) .. controls (\x+.33,\y+.33) and (\x+.66,\y+.33) .. (\x+1,\y);
    \draw[->] (\x,\y+1) .. controls (\x+.33,\y+.66) and (\x+.66,\y+.66) .. (\x+1,\y+1);
    }
\foreach \x/ \y in {1/1, 2/0, 4/0}
    {
    \draw[->] (\x,\y) .. controls (\x+.33,\y+.33) and (\x+.33,\y+.66) .. (\x,\y+1);
    \draw[->] (\x+1,\y) .. controls (\x+.66,\y+.33) and (\x+.66,\y+.66) .. (\x+1,\y+1);
    }
\foreach \x/ \y in {3/0}
    {
    \draw[<-] (\x,\y) .. controls (\x+.33,\y+.33) and (\x+.33,\y+.66) .. (\x,\y+1);
    \draw[<-] (\x+1,\y) .. controls (\x+.66,\y+.33) and (\x+.66,\y+.66) .. (\x+1,\y+1);
    }
\foreach \x/ \y in {2/0, 3/0, 4/0}
	{
\draw[black, ultra thick] (\x+1,\y) .. controls (\x+.66,\y+.33) and (\x+.66,\y+.66) .. (\x+1,\y+1);
	}
\foreach \x/ \y in {1/1}
	{
\draw[lightgray, ultra thick] (\x+1,\y) .. controls (\x+.66,\y+.33) and (\x+.66,\y+.66) .. (\x+1,\y+1);
	}
\draw[black, ultra thick] (-1,0) -- (0,0);
\draw[black, ultra thick] (0,2) arc (90:270:.5);
\end{tikzpicture}		& $6_1$ \\
\hline
+${\lightgray\Minus}\Plus$-++-	& $\sigma_1\sigma_2^{-1}\sigma_1\sigma_2^{-3}$												& 
\begin{tikzpicture}[scale=.5]
\draw[->] (-1,0) -- (0,0);
\draw[->] (6,2) -- (7,2);
\draw[->] (0,2) arc (90:270:.5);
\draw[<-] (6,0) arc (-90:90:.5);
\draw[<-] (0,2) -- (1,2);
\draw[->] (1,0) -- (2,0);
\draw[->] (2,2) -- (3,2);
\draw[<-] (3,0) -- (6,0);
\foreach \x/ \y in {0/0}
    {
    \draw[<-] (\x+1,\y) -- (\x,\y+1);
    \draw[color=white, line width=10] (\x,\y) -- (\x+1,\y+1);
    \draw[->] (\x,\y) -- (\x+1,\y+1);
    }
\foreach \x/ \y in {2/0}
    {
    \draw[->] (\x+1,\y) -- (\x,\y+1);
    \draw[color=white, line width=10] (\x,\y) -- (\x+1,\y+1);
    \draw[->] (\x,\y) -- (\x+1,\y+1);
    }
\foreach \x/ \y in {1/1}
    {
    \draw[->] (\x,\y) -- (\x+1,\y+1);
    \draw[color=white, line width=10] (\x+1,\y) -- (\x,\y+1);
    \draw[->] (\x+1,\y) -- (\x,\y+1);
    }
\foreach \x/ \y in {3/1, 4/1, 5/1}
    {
    \draw[->] (\x,\y) -- (\x+1,\y+1);
    \draw[color=white, line width=10] (\x+1,\y) -- (\x,\y+1);
    \draw[<-] (\x+1,\y) -- (\x,\y+1);
    }
\foreach \x/ \y in {2/0}
	{
\filldraw[shift={(\x+.5,\y+.5)}] [fill=black, draw=black, rounded corners] (0,0) circle (5pt);
	}
\foreach \x/ \y in {1/1}
	{
\filldraw[shift={(\x+.5,\y+.5)}] [fill=lightgray, draw=lightgray, rounded corners] (0,0) circle (5pt);
	}
\end{tikzpicture}		&
\begin{tikzpicture}[scale=.5]
\draw[->] (-1,0) -- (0,0);
\draw[->] (6,2) -- (7,2);
\draw[->] (0,2) arc (90:270:.5);
\draw[<-] (6,0) arc (-90:90:.5);
\draw[<-] (0,2) -- (1,2);
\draw[->] (1,0) -- (2,0);
\draw[->] (2,2) -- (3,2);
\draw[<-] (3,0) -- (6,0);
\foreach \x/ \y in {0/0}
    {
    \draw[->] (\x,\y) .. controls (\x+.33,\y+.33) and (\x+.66,\y+.33) .. (\x+1,\y);
    \draw[->] (\x,\y+1) .. controls (\x+.33,\y+.66) and (\x+.66,\y+.66) .. (\x+1,\y+1);
    }
\foreach \x/ \y in {1/1, 2/0}
    {
    \draw[->] (\x,\y) .. controls (\x+.33,\y+.33) and (\x+.33,\y+.66) .. (\x,\y+1);
    \draw[->] (\x+1,\y) .. controls (\x+.66,\y+.33) and (\x+.66,\y+.66) .. (\x+1,\y+1);
    }
\foreach \x/ \y in {3/1, 4/1, 5/1}
    {
    \draw[->] (\x,\y) .. controls (\x+.33,\y+.33) and (\x+.66,\y+.33) .. (\x+1,\y);
    \draw[->] (\x,\y+1) .. controls (\x+.33,\y+.66) and (\x+.66,\y+.66) .. (\x+1,\y+1);
    }
\foreach \x/ \y in {1/1}
	{
\draw[lightgray, ultra thick] (\x+1,\y) .. controls (\x+.66,\y+.33) and (\x+.66,\y+.66) .. (\x+1,\y+1);
	}
\foreach \x/ \y in {2/0}
	{
\draw[black, ultra thick] (\x+1,\y) .. controls (\x+.66,\y+.33) and (\x+.66,\y+.66) .. (\x+1,\y+1);
	}
\draw[black, ultra thick] (-1,0) -- (0,0);
\draw[black, ultra thick] (0,2) arc (90:270:.5);
\end{tikzpicture}		& $6_2$ \\
\hline
+-{}-${\lightgray\Plus\Plus}\Minus\Minus$++-	&	$\sigma_1^2\sigma_2^{-1}\sigma_1\sigma_2^{-2}$						& 
\begin{tikzpicture}[scale=.5]
\draw[->] (-1,0) -- (0,0);
\draw[->] (6,2) -- (7,2);
\draw[->] (0,2) arc (90:270:.5);
\draw[<-] (6,0) arc (-90:90:.5);
\draw[<-] (0,2) -- (2,2);
\draw[->] (2,0) -- (3,0);
\draw[->] (3,2) -- (4,2);
\draw[<-] (4,0) -- (6,0);
\foreach \x/ \y in {0/0, 1/0}
    {
    \draw[<-] (\x+1,\y) -- (\x,\y+1);
    \draw[color=white, line width=10] (\x,\y) -- (\x+1,\y+1);
    \draw[->] (\x,\y) -- (\x+1,\y+1);
    }
\foreach \x/ \y in {3/0}
    {
    \draw[->] (\x+1,\y) -- (\x,\y+1);
    \draw[color=white, line width=10] (\x,\y) -- (\x+1,\y+1);
    \draw[->] (\x,\y) -- (\x+1,\y+1);
    }
\foreach \x/ \y in {2/1}
    {
    \draw[->] (\x,\y) -- (\x+1,\y+1);
    \draw[color=white, line width=10] (\x+1,\y) -- (\x,\y+1);
    \draw[->] (\x+1,\y) -- (\x,\y+1);
    }
\foreach \x/ \y in {4/1, 5/1}
    {
    \draw[->] (\x,\y) -- (\x+1,\y+1);
    \draw[color=white, line width=10] (\x+1,\y) -- (\x,\y+1);
    \draw[<-] (\x+1,\y) -- (\x,\y+1);
    }
\foreach \x/ \y in {3/0}
	{
\filldraw[shift={(\x+.5,\y+.5)}] [fill=black, draw=black, rounded corners] (0,0) circle (5pt);
	}
\foreach \x/ \y in {2/1}
	{
\filldraw[shift={(\x+.5,\y+.5)}] [fill=lightgray, draw=lightgray, rounded corners] (0,0) circle (5pt);
	}
\end{tikzpicture}		&
\begin{tikzpicture}[scale=.5]
\draw[->] (-1,0) -- (0,0);
\draw[->] (6,2) -- (7,2);
\draw[->] (0,2) arc (90:270:.5);
\draw[<-] (6,0) arc (-90:90:.5);
\draw[<-] (0,2) -- (2,2);
\draw[->] (2,0) -- (3,0);
\draw[->] (3,2) -- (4,2);
\draw[<-] (4,0) -- (6,0);
\foreach \x/ \y in {0/0, 1/0, 4/1, 5/1}
    {
    \draw[->] (\x,\y) .. controls (\x+.33,\y+.33) and (\x+.66,\y+.33) .. (\x+1,\y);
    \draw[->] (\x,\y+1) .. controls (\x+.33,\y+.66) and (\x+.66,\y+.66) .. (\x+1,\y+1);
    }
\foreach \x/ \y in {2/1, 3/0}
    {
    \draw[->] (\x,\y) .. controls (\x+.33,\y+.33) and (\x+.33,\y+.66) .. (\x,\y+1);
    \draw[->] (\x+1,\y) .. controls (\x+.66,\y+.33) and (\x+.66,\y+.66) .. (\x+1,\y+1);
    }
\foreach \x/ \y in {3/0}
	{
\draw[black, ultra thick] (\x+1,\y) .. controls (\x+.66,\y+.33) and (\x+.66,\y+.66) .. (\x+1,\y+1);
	}
\foreach \x/ \y in {2/1}
	{
\draw[lightgray, ultra thick] (\x+1,\y) .. controls (\x+.66,\y+.33) and (\x+.66,\y+.66) .. (\x+1,\y+1);
	}
\draw[black, ultra thick] (-1,0) -- (0,0);
\draw[black, ultra thick] (0,2) arc (90:270:.5);
\end{tikzpicture}		& $6_3$ \\
\hline
\end{tabular}
\caption{All reduced billiard table words for $c=6$ crossings with their associated alternating words, alternating diagrams, and Seifert circles.  The viable vertically-smoothed crossings of the alternating diagram are marked in black; these correspond to new Seifert circles.  The non-viable ones marked in gray.}
\label{tab:c6}
\end{table}


\begin{example}
\label{ex:c7}
Let us consider the set of all knots obtained in this model for crossing number $c=7$.  Since $7\equiv 1$ modulo 3, we only consider adding $d\equiv 3$ modulo 3 doubles, giving us the data in Table \ref{tab:c7}.

The actual average number $s$ of Seifert circles for $c=7$ is $2+\frac{26}{11}=\frac{48}{11}=4\frac{4}{11}$.

The upper bound on the average number of Seifert circles given by Theorem \ref{thm:Seifert} is $2+\frac{32}{11}=\frac{54}{11}=4\frac{10}{11}$.

By the genus formula for alternating knots, the actual average genus is $g(K)=1-\frac{1+s-c}{2}=\frac{1-s+c}{2}=\frac{1+c}{2}-\frac{s}{2}=4-\frac{24}{11}=\frac{20}{11}=1\frac{9}{11}$.

The lower bound on the average genus is $g(K)=4-\frac{27}{11}=\frac{17}{11}=1\frac{6}{11}$.
\end{example}


\begin{table}
\begin{tabular}{|l|c|c|c|c|}
\hline
Billiard Table &	Alternating Word 																														& Diagram $D$ & Seifert circles & Knot \\
\hspace{5mm} Word	$w$	 & & &  & \hspace{1mm} $K$ \\
\hline
+${\lightgray\Minus}\Plus$-${\lightgray\Plus}\Minus$+ 				&	$\sigma_1\sigma_2^{-1}\sigma_1\sigma_2^{-1}\sigma_1\sigma_2^{-1}\sigma_1$		& 
\begin{tikzpicture}[scale=.5]
\draw[->] (-1,0) -- (0,0);
\draw[->] (7,0) -- (8,0);
\draw[<-] (0,2) -- (1,2);
\draw[->] (2,2) -- (3,2);
\draw[->] (4,2) -- (5,2);
\draw[<-] (6,2) -- (7,2);
\draw[->] (1,0) -- (2,0);
\draw[<-] (3,0) -- (4,0);
\draw[->] (5,0) -- (6,0);
\draw[->] (0,2) arc (90:270:.5);
\draw[->] (7,1) arc (-90:90:.5);
\foreach \x/ \y in {2/0}
    {
    \draw[->] (\x+1,\y) -- (\x,\y+1);
    \draw[color=white, line width=10] (\x,\y) -- (\x+1,\y+1);
    \draw[->] (\x,\y) -- (\x+1,\y+1);
    }
\foreach \x/ \y in {0/0, 6/0}
    {
    \draw[<-] (\x+1,\y) -- (\x,\y+1);
    \draw[color=white, line width=10] (\x,\y) -- (\x+1,\y+1);
    \draw[->] (\x,\y) -- (\x+1,\y+1);
    }
\foreach \x/ \y in {4/0}
    {
    \draw[<-] (\x+1,\y) -- (\x,\y+1);
    \draw[color=white, line width=10] (\x,\y) -- (\x+1,\y+1);
    \draw[<-] (\x,\y) -- (\x+1,\y+1);
    }
\foreach \x/ \y in {1/1}
    {
    \draw[->] (\x,\y) -- (\x+1,\y+1);
    \draw[color=white, line width=10] (\x+1,\y) -- (\x,\y+1);
    \draw[->] (\x+1,\y) -- (\x,\y+1);
    }
\foreach \x/ \y in {3/1}
    {
    \draw[->] (\x,\y) -- (\x+1,\y+1);
    \draw[color=white, line width=10] (\x+1,\y) -- (\x,\y+1);
    \draw[<-] (\x+1,\y) -- (\x,\y+1);
    }
\foreach \x/ \y in {5/1}
    {
    \draw[<-] (\x,\y) -- (\x+1,\y+1);
    \draw[color=white, line width=10] (\x+1,\y) -- (\x,\y+1);
    \draw[<-] (\x+1,\y) -- (\x,\y+1);
    }
\foreach \x/ \y in {2/0, 5/1}
	{
\filldraw[shift={(\x+.5,\y+.5)}] [fill=black, draw=black, rounded corners] (0,0) circle (5pt);
	}
\foreach \x/ \y in {1/1, 4/0}
	{
\filldraw[shift={(\x+.5,\y+.5)}] [fill=lightgray, draw=lightgray, rounded corners] (0,0) circle (5pt);
	}
\end{tikzpicture}			&
\begin{tikzpicture}[scale=.5]
\draw[->] (-1,0) -- (0,0);
\draw[->] (7,0) -- (8,0);
\draw[<-] (0,2) -- (1,2);
\draw[->] (2,2) -- (3,2);
\draw[->] (4,2) -- (5,2);
\draw[<-] (6,2) -- (7,2);
\draw[->] (1,0) -- (2,0);
\draw[<-] (3,0) -- (4,0);
\draw[->] (5,0) -- (6,0);
\draw[->] (0,2) arc (90:270:.5);
\draw[->] (7,1) arc (-90:90:.5);
\foreach \x/ \y in {2/0}
    {
    \draw[->] (\x,\y) .. controls (\x+.33,\y+.33) and (\x+.33,\y+.66) .. (\x,\y+1);
    \draw[->] (\x+1,\y) .. controls (\x+.66,\y+.33) and (\x+.66,\y+.66) .. (\x+1,\y+1);
    }
\foreach \x/ \y in {0/0, 6/0}
    {
    \draw[->] (\x,\y) .. controls (\x+.33,\y+.33) and (\x+.66,\y+.33) .. (\x+1,\y);
    \draw[->] (\x,\y+1) .. controls (\x+.33,\y+.66) and (\x+.66,\y+.66) .. (\x+1,\y+1);
    }
\foreach \x/ \y in {4/0}
    {
    \draw[<-] (\x,\y) .. controls (\x+.33,\y+.33) and (\x+.33,\y+.66) .. (\x,\y+1);
    \draw[<-] (\x+1,\y) .. controls (\x+.66,\y+.33) and (\x+.66,\y+.66) .. (\x+1,\y+1);
    }
\foreach \x/ \y in {1/1}
    {
    \draw[->] (\x,\y) .. controls (\x+.33,\y+.33) and (\x+.33,\y+.66) .. (\x,\y+1);
    \draw[->] (\x+1,\y) .. controls (\x+.66,\y+.33) and (\x+.66,\y+.66) .. (\x+1,\y+1);
    }
\foreach \x/ \y in {3/1}
    {
    \draw[->] (\x,\y) .. controls (\x+.33,\y+.33) and (\x+.66,\y+.33) .. (\x+1,\y);
    \draw[->] (\x,\y+1) .. controls (\x+.33,\y+.66) and (\x+.66,\y+.66) .. (\x+1,\y+1);
    }
\foreach \x/ \y in {5/1}
    {
    \draw[<-] (\x,\y) .. controls (\x+.33,\y+.33) and (\x+.33,\y+.66) .. (\x,\y+1);
    \draw[<-] (\x+1,\y) .. controls (\x+.66,\y+.33) and (\x+.66,\y+.66) .. (\x+1,\y+1);
    }
\foreach \x/ \y in {2/0, 5/1}
	{
\draw[black, ultra thick] (\x,\y) .. controls (\x+.33,\y+.33) and (\x+.33,\y+.66) .. (\x,\y+1);
	}
\foreach \x/ \y in {1/1, 4/0}
	{
\draw[lightgray, ultra thick] (\x+1,\y) .. controls (\x+.66,\y+.33) and (\x+.66,\y+.66) .. (\x+1,\y+1);
	}
\draw[black, ultra thick] (-1,0) -- (0,0);
\draw[black, ultra thick] (0,2) arc (90:270:.5);
\end{tikzpicture}			& $7_7$   \\
\hline
+${\lightgray\Minus}\Plus{\lightgray\Minus\Minus}\Plus\Plus$-{}-+	& $\sigma_1\sigma_2^{-1}\sigma_1^2\sigma_2^{-1}\sigma_1^2$					& 
\begin{tikzpicture}[scale=.5]
\draw[->] (-1,0) -- (0,0);
\draw[->] (7,0) -- (8,0);
\draw[<-] (0,2) -- (1,2);
\draw[->] (2,2) -- (3,2);
\draw[->] (3,2) -- (4,2);
\draw[->] (4,0) -- (5,0);
\draw[<-] (5,2) -- (6,2);
\draw[<-] (6,2) -- (7,2);
\draw[->] (1,0) -- (2,0);
\draw[->] (0,2) arc (90:270:.5);
\draw[->] (7,1) arc (-90:90:.5);
\foreach \x/ \y in {2/0}
    {
    \draw[->] (\x+1,\y) -- (\x,\y+1);
    \draw[color=white, line width=10] (\x,\y) -- (\x+1,\y+1);
    \draw[->] (\x,\y) -- (\x+1,\y+1);
    }
\foreach \x/ \y in {0/0, 5/0, 6/0}
    {
    \draw[<-] (\x+1,\y) -- (\x,\y+1);
    \draw[color=white, line width=10] (\x,\y) -- (\x+1,\y+1);
    \draw[->] (\x,\y) -- (\x+1,\y+1);
    }
\foreach \x/ \y in {3/0}
    {
    \draw[<-] (\x+1,\y) -- (\x,\y+1);
    \draw[color=white, line width=10] (\x,\y) -- (\x+1,\y+1);
    \draw[<-] (\x,\y) -- (\x+1,\y+1);
    }
\foreach \x/ \y in {1/1}
    {
    \draw[->] (\x,\y) -- (\x+1,\y+1);
    \draw[color=white, line width=10] (\x+1,\y) -- (\x,\y+1);
    \draw[->] (\x+1,\y) -- (\x,\y+1);
    }
\foreach \x/ \y in {4/1}
    {
    \draw[<-] (\x,\y) -- (\x+1,\y+1);
    \draw[color=white, line width=10] (\x+1,\y) -- (\x,\y+1);
    \draw[<-] (\x+1,\y) -- (\x,\y+1);
    }
\foreach \x/ \y in {2/0, 4/1}
	{
\filldraw[shift={(\x+.5,\y+.5)}] [fill=black, draw=black, rounded corners] (0,0) circle (5pt);
	}
\foreach \x/ \y in {1/1, 3/0}
	{
\filldraw[shift={(\x+.5,\y+.5)}] [fill=lightgray, draw=lightgray, rounded corners] (0,0) circle (5pt);
	}
\end{tikzpicture}		&
\begin{tikzpicture}[scale=.5]
\draw[->] (-1,0) -- (0,0);
\draw[->] (7,0) -- (8,0);
\draw[<-] (0,2) -- (1,2);
\draw[->] (2,2) -- (3,2);
\draw[->] (3,2) -- (4,2);
\draw[->] (4,0) -- (5,0);
\draw[<-] (5,2) -- (6,2);
\draw[<-] (6,2) -- (7,2);
\draw[->] (1,0) -- (2,0);
\draw[->] (0,2) arc (90:270:.5);
\draw[->] (7,1) arc (-90:90:.5);
\foreach \x/ \y in {2/0}
    {
    \draw[->] (\x,\y) .. controls (\x+.33,\y+.33) and (\x+.33,\y+.66) .. (\x,\y+1);
    \draw[->] (\x+1,\y) .. controls (\x+.66,\y+.33) and (\x+.66,\y+.66) .. (\x+1,\y+1);
    }
\foreach \x/ \y in {0/0, 5/0, 6/0}
    {
    \draw[->] (\x,\y) .. controls (\x+.33,\y+.33) and (\x+.66,\y+.33) .. (\x+1,\y);
    \draw[->] (\x,\y+1) .. controls (\x+.33,\y+.66) and (\x+.66,\y+.66) .. (\x+1,\y+1);
    }
\foreach \x/ \y in {3/0}
    {
    \draw[<-] (\x,\y) .. controls (\x+.33,\y+.33) and (\x+.33,\y+.66) .. (\x,\y+1);
    \draw[<-] (\x+1,\y) .. controls (\x+.66,\y+.33) and (\x+.66,\y+.66) .. (\x+1,\y+1);
    }
\foreach \x/ \y in {1/1}
    {
    \draw[->] (\x,\y) .. controls (\x+.33,\y+.33) and (\x+.33,\y+.66) .. (\x,\y+1);
    \draw[->] (\x+1,\y) .. controls (\x+.66,\y+.33) and (\x+.66,\y+.66) .. (\x+1,\y+1);
    }
\foreach \x/ \y in {4/1}
    {
    \draw[<-] (\x,\y) .. controls (\x+.33,\y+.33) and (\x+.33,\y+.66) .. (\x,\y+1);
    \draw[<-] (\x+1,\y) .. controls (\x+.66,\y+.33) and (\x+.66,\y+.66) .. (\x+1,\y+1);
    }
\foreach \x/ \y in {2/0, 4/1}
	{
\draw[black, ultra thick] (\x+1,\y) .. controls (\x+.66,\y+.33) and (\x+.66,\y+.66) .. (\x+1,\y+1);
	}
\foreach \x/ \y in {1/1, 3/0}
	{
\draw[lightgray, ultra thick] (\x+1,\y) .. controls (\x+.66,\y+.33) and (\x+.66,\y+.66) .. (\x+1,\y+1);
	}
\draw[black, ultra thick] (-1,0) -- (0,0);
\draw[black, ultra thick] (0,2) arc (90:270:.5);
\end{tikzpicture}		& $7_6$ \\

\hline
+$\Minus\Plus\Plus\Minus\Plus\Plus$-{}-+	&	$\sigma_1\sigma_2^{-4}\sigma_1^2$							& 
\begin{tikzpicture}[scale=.5]
\draw[->] (-1,0) -- (0,0);
\draw[->] (7,0) -- (8,0);
\draw[<-] (0,2) -- (1,2);
\draw[->] (1,0) -- (2,0);
\draw[->] (2,0) -- (3,0);
\draw[->] (3,0) -- (4,0);
\draw[->] (4,0) -- (5,0);
\draw[<-] (5,2) -- (6,2);
\draw[<-] (6,2) -- (7,2);
\draw[->] (0,2) arc (90:270:.5);
\draw[->] (7,1) arc (-90:90:.5);
\foreach \x/ \y in {0/0, 5/0, 6/0}
    {
    \draw[<-] (\x+1,\y) -- (\x,\y+1);
    \draw[color=white, line width=10] (\x,\y) -- (\x+1,\y+1);
    \draw[->] (\x,\y) -- (\x+1,\y+1);
    }
\foreach \x/ \y in {1/1,3/1}
    {
    \draw[->] (\x,\y) -- (\x+1,\y+1);
    \draw[color=white, line width=10] (\x+1,\y) -- (\x,\y+1);
    \draw[->] (\x+1,\y) -- (\x,\y+1);
    }
\foreach \x/ \y in {2/1,4/1}
    {
    \draw[<-] (\x,\y) -- (\x+1,\y+1);
    \draw[color=white, line width=10] (\x+1,\y) -- (\x,\y+1);
    \draw[<-] (\x+1,\y) -- (\x,\y+1);
    }
\foreach \x/ \y in {1/1, 2/1, 3/1, 4/1}
	{
\filldraw[shift={(\x+.5,\y+.5)}] [fill=black, draw=black, rounded corners] (0,0) circle (5pt);
	}
\end{tikzpicture}		&
\begin{tikzpicture}[scale=.5]
\draw[->] (-1,0) -- (0,0);
\draw[->] (7,0) -- (8,0);
\draw[<-] (0,2) -- (1,2);
\draw[->] (1,0) -- (2,0);
\draw[->] (2,0) -- (3,0);
\draw[->] (3,0) -- (4,0);
\draw[->] (4,0) -- (5,0);
\draw[<-] (5,2) -- (6,2);
\draw[<-] (6,2) -- (7,2);
\draw[->] (0,2) arc (90:270:.5);
\draw[->] (7,1) arc (-90:90:.5);
\foreach \x/ \y in {0/0, 5/0, 6/0}
    {
    \draw[->] (\x,\y) .. controls (\x+.33,\y+.33) and (\x+.66,\y+.33) .. (\x+1,\y);
    \draw[->] (\x,\y+1) .. controls (\x+.33,\y+.66) and (\x+.66,\y+.66) .. (\x+1,\y+1);
    }
\foreach \x/ \y in {1/1,3/1}
    {
    \draw[->] (\x,\y) .. controls (\x+.33,\y+.33) and (\x+.33,\y+.66) .. (\x,\y+1);
    \draw[->] (\x+1,\y) .. controls (\x+.66,\y+.33) and (\x+.66,\y+.66) .. (\x+1,\y+1);
    }
\foreach \x/ \y in {2/1,4/1}
    {
    \draw[<-] (\x,\y) .. controls (\x+.33,\y+.33) and (\x+.33,\y+.66) .. (\x,\y+1);
    \draw[<-] (\x+1,\y) .. controls (\x+.66,\y+.33) and (\x+.66,\y+.66) .. (\x+1,\y+1);
    }
\foreach \x/ \y in {1/1, 2/1, 3/1, 4/1}
	{
\draw[black, ultra thick] (\x+1,\y) .. controls (\x+.66,\y+.33) and (\x+.66,\y+.66) .. (\x+1,\y+1);
	}
\draw[black, ultra thick] (-1,0) -- (0,0);
\draw[black, ultra thick] (0,2) arc (90:270:.5);
\end{tikzpicture}		& $7_2$ \\
\hline
+$\Minus\Plus\Plus$-{}-+-{}-+	&	$\sigma_1\sigma_2^{-2}\sigma_1^4$												& 
\begin{tikzpicture}[scale=.5]
\draw[->] (-1,0) -- (0,0);
\draw[->] (7,0) -- (8,0);
\draw[<-] (0,2) -- (1,2);
\draw[->] (1,0) -- (2,0);
\draw[->] (2,0) -- (3,0);
\draw[<-] (3,2) -- (4,2);
\draw[<-] (4,2) -- (5,2);
\draw[<-] (5,2) -- (6,2);
\draw[<-] (6,2) -- (7,2);
\draw[->] (0,2) arc (90:270:.5);
\draw[->] (7,1) arc (-90:90:.5);
\foreach \x/ \y in {0/0, 3/0, 4/0, 5/0, 6/0}
    {
    \draw[<-] (\x+1,\y) -- (\x,\y+1);
    \draw[color=white, line width=10] (\x,\y) -- (\x+1,\y+1);
    \draw[->] (\x,\y) -- (\x+1,\y+1);
    }
\foreach \x/ \y in {1/1}
    {
    \draw[->] (\x,\y) -- (\x+1,\y+1);
    \draw[color=white, line width=10] (\x+1,\y) -- (\x,\y+1);
    \draw[->] (\x+1,\y) -- (\x,\y+1);
    }
\foreach \x/ \y in {2/1}
    {
    \draw[<-] (\x,\y) -- (\x+1,\y+1);
    \draw[color=white, line width=10] (\x+1,\y) -- (\x,\y+1);
    \draw[<-] (\x+1,\y) -- (\x,\y+1);
    }
\foreach \x/ \y in {1/1, 2/1}
	{
\filldraw[shift={(\x+.5,\y+.5)}] [fill=black, draw=black, rounded corners] (0,0) circle (5pt);
	}
\end{tikzpicture}		&
\begin{tikzpicture}[scale=.5]
\draw[->] (-1,0) -- (0,0);
\draw[->] (7,0) -- (8,0);
\draw[<-] (0,2) -- (1,2);
\draw[->] (1,0) -- (2,0);
\draw[->] (2,0) -- (3,0);
\draw[<-] (3,2) -- (4,2);
\draw[<-] (4,2) -- (5,2);
\draw[<-] (5,2) -- (6,2);
\draw[<-] (6,2) -- (7,2);
\draw[->] (0,2) arc (90:270:.5);
\draw[->] (7,1) arc (-90:90:.5);
\foreach \x/ \y in {0/0, 3/0, 4/0, 5/0, 6/0}
    {
    \draw[->] (\x,\y) .. controls (\x+.33,\y+.33) and (\x+.66,\y+.33) .. (\x+1,\y);
    \draw[->] (\x,\y+1) .. controls (\x+.33,\y+.66) and (\x+.66,\y+.66) .. (\x+1,\y+1);
    }
\foreach \x/ \y in {1/1}
    {
    \draw[->] (\x,\y) .. controls (\x+.33,\y+.33) and (\x+.33,\y+.66) .. (\x,\y+1);
    \draw[->] (\x+1,\y) .. controls (\x+.66,\y+.33) and (\x+.66,\y+.66) .. (\x+1,\y+1);
    }
\foreach \x/ \y in {2/1}
    {
    \draw[<-] (\x,\y) .. controls (\x+.33,\y+.33) and (\x+.33,\y+.66) .. (\x,\y+1);
    \draw[<-] (\x+1,\y) .. controls (\x+.66,\y+.33) and (\x+.66,\y+.66) .. (\x+1,\y+1);
    }
\foreach \x/ \y in {1/1, 2/1}
	{
\draw[black, ultra thick] (\x+1,\y) .. controls (\x+.66,\y+.33) and (\x+.66,\y+.66) .. (\x+1,\y+1);
	}
\draw[black, ultra thick] (-1,0) -- (0,0);
\draw[black, ultra thick] (0,2) arc (90:270:.5);
\end{tikzpicture}		& $7_3$ \\
\hline
+$\Minus\Plus\Plus$-{}-$\Plus\Plus\Minus$+	&	$\sigma_1\sigma_2^{-2}\sigma_1\sigma_2^{-2}\sigma_1$						& 
\begin{tikzpicture}[scale=.5]
\draw[->] (-1,0) -- (0,0);
\draw[->] (7,0) -- (8,0);
\draw[<-] (0,2) -- (1,2);
\draw[->] (1,0) -- (2,0);
\draw[->] (2,0) -- (3,0);
\draw[<-] (3,2) -- (4,2);
\draw[->] (4,0) -- (5,0);
\draw[->] (5,0) -- (6,0);
\draw[<-] (6,2) -- (7,2);
\draw[->] (0,2) arc (90:270:.5);
\draw[->] (7,1) arc (-90:90:.5);
\foreach \x/ \y in {0/0, 3/0, 6/0}
    {
    \draw[<-] (\x+1,\y) -- (\x,\y+1);
    \draw[color=white, line width=10] (\x,\y) -- (\x+1,\y+1);
    \draw[->] (\x,\y) -- (\x+1,\y+1);
    }
\foreach \x/ \y in {1/1, 4/1}
    {
    \draw[->] (\x,\y) -- (\x+1,\y+1);
    \draw[color=white, line width=10] (\x+1,\y) -- (\x,\y+1);
    \draw[->] (\x+1,\y) -- (\x,\y+1);
    }
\foreach \x/ \y in {2/1, 5/1}
    {
    \draw[<-] (\x,\y) -- (\x+1,\y+1);
    \draw[color=white, line width=10] (\x+1,\y) -- (\x,\y+1);
    \draw[<-] (\x+1,\y) -- (\x,\y+1);
    }
\foreach \x/ \y in {1/1, 2/1, 4/1, 5/1}
	{
\filldraw[shift={(\x+.5,\y+.5)}] [fill=black, draw=black, rounded corners] (0,0) circle (5pt);
	}
\end{tikzpicture}		&
\begin{tikzpicture}[scale=.5]
\draw[->] (-1,0) -- (0,0);
\draw[->] (7,0) -- (8,0);
\draw[<-] (0,2) -- (1,2);
\draw[->] (1,0) -- (2,0);
\draw[->] (2,0) -- (3,0);
\draw[<-] (3,2) -- (4,2);
\draw[->] (4,0) -- (5,0);
\draw[->] (5,0) -- (6,0);
\draw[<-] (6,2) -- (7,2);
\draw[->] (0,2) arc (90:270:.5);
\draw[->] (7,1) arc (-90:90:.5);
\foreach \x/ \y in {0/0, 3/0, 6/0}
    {
    \draw[->] (\x,\y) .. controls (\x+.33,\y+.33) and (\x+.66,\y+.33) .. (\x+1,\y);
    \draw[->] (\x,\y+1) .. controls (\x+.33,\y+.66) and (\x+.66,\y+.66) .. (\x+1,\y+1);
    }
\foreach \x/ \y in {1/1, 4/1}
    {
    \draw[->] (\x,\y) .. controls (\x+.33,\y+.33) and (\x+.33,\y+.66) .. (\x,\y+1);
    \draw[->] (\x+1,\y) .. controls (\x+.66,\y+.33) and (\x+.66,\y+.66) .. (\x+1,\y+1);
    }
\foreach \x/ \y in {2/1, 5/1}
    {
    \draw[<-] (\x,\y) .. controls (\x+.33,\y+.33) and (\x+.33,\y+.66) .. (\x,\y+1);
    \draw[<-] (\x+1,\y) .. controls (\x+.66,\y+.33) and (\x+.66,\y+.66) .. (\x+1,\y+1);
    }
\foreach \x/ \y in {1/1, 2/1, 4/1, 5/1}
	{
\draw[black, ultra thick] (\x+1,\y) .. controls (\x+.66,\y+.33) and (\x+.66,\y+.66) .. (\x+1,\y+1);
	}
\draw[black, ultra thick] (-1,0) -- (0,0);
\draw[black, ultra thick] (0,2) arc (90:270:.5);
\end{tikzpicture}		& $7_4$ \\
\hline
+-{}-+$\Minus\Plus\Plus$-{}-+	&	$\sigma_1^3\sigma_2^{-2}\sigma_1^2$											&
\begin{tikzpicture}[scale=.5]
\draw[->] (-1,0) -- (0,0);
\draw[->] (7,0) -- (8,0);
\draw[<-] (0,2) -- (1,2);
\draw[<-] (1,2) -- (2,2);
\draw[<-] (2,2) -- (3,2);
\draw[->] (3,0) -- (4,0);
\draw[->] (4,0) -- (5,0);
\draw[<-] (5,2) -- (6,2);
\draw[<-] (6,2) -- (7,2);
\draw[->] (0,2) arc (90:270:.5);
\draw[->] (7,1) arc (-90:90:.5);
\foreach \x/ \y in {0/0, 1/0, 2/0, 5/0, 6/0}
    {
    \draw[<-] (\x+1,\y) -- (\x,\y+1);
    \draw[color=white, line width=10] (\x,\y) -- (\x+1,\y+1);
    \draw[->] (\x,\y) -- (\x+1,\y+1);
    }
\foreach \x/ \y in {3/1}
    {
    \draw[->] (\x,\y) -- (\x+1,\y+1);
    \draw[color=white, line width=10] (\x+1,\y) -- (\x,\y+1);
    \draw[->] (\x+1,\y) -- (\x,\y+1);
    }
\foreach \x/ \y in {4/1}
    {
    \draw[<-] (\x,\y) -- (\x+1,\y+1);
    \draw[color=white, line width=10] (\x+1,\y) -- (\x,\y+1);
    \draw[<-] (\x+1,\y) -- (\x,\y+1);
    }
\foreach \x/ \y in {3/1, 4/1}
	{
\filldraw[shift={(\x+.5,\y+.5)}] [fill=black, draw=black, rounded corners] (0,0) circle (5pt);
	}
\end{tikzpicture}			&
\begin{tikzpicture}[scale=.5]
\draw[->] (-1,0) -- (0,0);
\draw[->] (7,0) -- (8,0);
\draw[<-] (0,2) -- (1,2);
\draw[<-] (1,2) -- (2,2);
\draw[<-] (2,2) -- (3,2);
\draw[->] (3,0) -- (4,0);
\draw[->] (4,0) -- (5,0);
\draw[<-] (5,2) -- (6,2);
\draw[<-] (6,2) -- (7,2);
\draw[->] (0,2) arc (90:270:.5);
\draw[->] (7,1) arc (-90:90:.5);
\foreach \x/ \y in {0/0, 1/0, 2/0, 5/0, 6/0}
    {
    \draw[->] (\x,\y) .. controls (\x+.33,\y+.33) and (\x+.66,\y+.33) .. (\x+1,\y);
    \draw[->] (\x,\y+1) .. controls (\x+.33,\y+.66) and (\x+.66,\y+.66) .. (\x+1,\y+1);
    }
\foreach \x/ \y in {3/1}
    {
    \draw[->] (\x,\y) .. controls (\x+.33,\y+.33) and (\x+.33,\y+.66) .. (\x,\y+1);
    \draw[->] (\x+1,\y) .. controls (\x+.66,\y+.33) and (\x+.66,\y+.66) .. (\x+1,\y+1);
    }
\foreach \x/ \y in {4/1}
    {
    \draw[<-] (\x,\y) .. controls (\x+.33,\y+.33) and (\x+.33,\y+.66) .. (\x,\y+1);
    \draw[<-] (\x+1,\y) .. controls (\x+.66,\y+.33) and (\x+.66,\y+.66) .. (\x+1,\y+1);
    }
\foreach \x/ \y in {3/1, 4/1}
	{
\draw[black, ultra thick] (\x+1,\y) .. controls (\x+.66,\y+.33) and (\x+.66,\y+.66) .. (\x+1,\y+1);
	}
\draw[black, ultra thick] (-1,0) -- (0,0);
\draw[black, ultra thick] (0,2) arc (90:270:.5);
\end{tikzpicture}			& $7_5$ \\

\hline
+-{}-+-{}-+-{}-+	&	$\sigma_1^7$																			&
\begin{tikzpicture}[scale=.5]
\draw[->] (-1,0) -- (0,0);
\draw[->] (7,0) -- (8,0);
\draw[<-] (0,2) -- (7,2);
\draw[->] (0,2) arc (90:270:.5);
\draw[->] (7,1) arc (-90:90:.5);
\draw[white] (1,2.25) -- (2,2.25);

\foreach \x/ \y in {0/0, 1/0, 2/0, 3/0, 4/0, 5/0, 6/0}
    {
    \draw[<-] (\x+1,\y) -- (\x,\y+1);
    \draw[color=white, line width=10] (\x,\y) -- (\x+1,\y+1);
    \draw[->] (\x,\y) -- (\x+1,\y+1);
		}
\end{tikzpicture}			&
\begin{tikzpicture}[scale=.5]
\draw[->] (-1,0) -- (0,0);
\draw[->] (7,0) -- (8,0);
\draw[<-] (0,2) -- (7,2);
\draw[->] (0,2) arc (90:270:.5);
\draw[->] (7,1) arc (-90:90:.5);
\draw[white] (1,2.25) -- (2,2.25);

\foreach \x/ \y in {0/0, 1/0, 2/0, 3/0, 4/0, 5/0, 6/0}
    {
    \draw[->] (\x,\y) .. controls (\x+.33,\y+.33) and (\x+.66,\y+.33) .. (\x+1,\y);
    \draw[->] (\x,\y+1) .. controls (\x+.33,\y+.66) and (\x+.66,\y+.66) .. (\x+1,\y+1);
    }
\draw[black, ultra thick] (-1,0) -- (0,0);
\draw[black, ultra thick] (0,2) arc (90:270:.5);
\end{tikzpicture}			& $7_1$  \\
\hline
+-{}-+-{}-$\Plus\Plus\Minus$+	&	$\sigma_1^4\sigma_2^{-2}\sigma_1$									& 
\begin{tikzpicture}[scale=.5]
\draw[->] (-1,0) -- (0,0);
\draw[->] (7,0) -- (8,0);
\draw[<-] (0,2) -- (4,2);
\draw[->] (4,0) -- (6,0);
\draw[<-] (6,2) -- (7,2);
\draw[->] (0,2) arc (90:270:.5);
\draw[->] (7,1) arc (-90:90:.5);

\foreach \x/ \y in {0/0, 1/0, 2/0, 3/0, 6/0}
    {
    \draw[<-] (\x+1,\y) -- (\x,\y+1);
    \draw[color=white, line width=10] (\x,\y) -- (\x+1,\y+1);
    \draw[->] (\x,\y) -- (\x+1,\y+1);
		}
\foreach \x/ \y in {4/1}
    {
    \draw[->] (\x,\y) -- (\x+1,\y+1);
    \draw[color=white, line width=10] (\x+1,\y) -- (\x,\y+1);
    \draw[->] (\x+1,\y) -- (\x,\y+1);
    }
\foreach \x/ \y in {5/1}
    {
    \draw[<-] (\x,\y) -- (\x+1,\y+1);
    \draw[color=white, line width=10] (\x+1,\y) -- (\x,\y+1);
    \draw[<-] (\x+1,\y) -- (\x,\y+1);
    }
\foreach \x/ \y in {4/1, 5/1}
	{
\filldraw[shift={(\x+.5,\y+.5)}] [fill=black, draw=black, rounded corners] (0,0) circle (5pt);
	}
\end{tikzpicture}			&
\begin{tikzpicture}[scale=.5]
\draw[->] (-1,0) -- (0,0);
\draw[->] (7,0) -- (8,0);
\draw[<-] (0,2) -- (4,2);
\draw[->] (4,0) -- (6,0);
\draw[<-] (6,2) -- (7,2);
\draw[->] (0,2) arc (90:270:.5);
\draw[->] (7,1) arc (-90:90:.5);

\foreach \x/ \y in {0/0, 1/0, 2/0, 3/0, 6/0}
    {
    \draw[->] (\x,\y) .. controls (\x+.33,\y+.33) and (\x+.66,\y+.33) .. (\x+1,\y);
    \draw[->] (\x,\y+1) .. controls (\x+.33,\y+.66) and (\x+.66,\y+.66) .. (\x+1,\y+1);
    }
\foreach \x/ \y in {4/1}
    {
    \draw[->] (\x,\y) .. controls (\x+.33,\y+.33) and (\x+.33,\y+.66) .. (\x,\y+1);
    \draw[->] (\x+1,\y) .. controls (\x+.66,\y+.33) and (\x+.66,\y+.66) .. (\x+1,\y+1);
    }
\foreach \x/ \y in {5/1}
    {
    \draw[<-] (\x,\y) .. controls (\x+.33,\y+.33) and (\x+.33,\y+.66) .. (\x,\y+1);
    \draw[<-] (\x+1,\y) .. controls (\x+.66,\y+.33) and (\x+.66,\y+.66) .. (\x+1,\y+1);
    }
\foreach \x/ \y in {4/1, 5/1}
	{
\draw[black, ultra thick] (\x+1,\y) .. controls (\x+.66,\y+.33) and (\x+.66,\y+.66) .. (\x+1,\y+1);
	}
\draw[black, ultra thick] (-1,0) -- (0,0);
\draw[black, ultra thick] (0,2) arc (90:270:.5);
\end{tikzpicture}				& $7_3$ \\
\hline
+-{}-$\Plus\Plus\Minus$+-{}-+	&	$\sigma_1^2\sigma_2^{-2}\sigma_1^3$																			&
\begin{tikzpicture}[scale=.5]
\draw[->] (-1,0) -- (0,0);
\draw[->] (7,0) -- (8,0);
\draw[<-] (0,2) -- (2,2);
\draw[->] (2,0) -- (4,0);
\draw[<-] (4,2) -- (7,2);
\draw[->] (0,2) arc (90:270:.5);
\draw[->] (7,1) arc (-90:90:.5);

\foreach \x/ \y in {0/0, 1/0, 4/0, 5/0, 6/0}
    {
    \draw[<-] (\x+1,\y) -- (\x,\y+1);
    \draw[color=white, line width=10] (\x,\y) -- (\x+1,\y+1);
    \draw[->] (\x,\y) -- (\x+1,\y+1);
		}
\foreach \x/ \y in {2/1}
    {
    \draw[->] (\x,\y) -- (\x+1,\y+1);
    \draw[color=white, line width=10] (\x+1,\y) -- (\x,\y+1);
    \draw[->] (\x+1,\y) -- (\x,\y+1);
    }
\foreach \x/ \y in {3/1}
    {
    \draw[<-] (\x,\y) -- (\x+1,\y+1);
    \draw[color=white, line width=10] (\x+1,\y) -- (\x,\y+1);
    \draw[<-] (\x+1,\y) -- (\x,\y+1);
    }
\foreach \x/ \y in {2/1, 3/1}
	{
\filldraw[shift={(\x+.5,\y+.5)}] [fill=black, draw=black, rounded corners] (0,0) circle (5pt);
	}
\end{tikzpicture}			&
\begin{tikzpicture}[scale=.5]
\draw[->] (-1,0) -- (0,0);
\draw[->] (7,0) -- (8,0);
\draw[<-] (0,2) -- (2,2);
\draw[->] (2,0) -- (4,0);
\draw[<-] (4,2) -- (7,2);
\draw[->] (0,2) arc (90:270:.5);
\draw[->] (7,1) arc (-90:90:.5);

\foreach \x/ \y in {0/0, 1/0, 4/0, 5/0, 6/0}
    {
    \draw[->] (\x,\y) .. controls (\x+.33,\y+.33) and (\x+.66,\y+.33) .. (\x+1,\y);
    \draw[->] (\x,\y+1) .. controls (\x+.33,\y+.66) and (\x+.66,\y+.66) .. (\x+1,\y+1);
    }
\foreach \x/ \y in {2/1}
    {
    \draw[->] (\x,\y) .. controls (\x+.33,\y+.33) and (\x+.33,\y+.66) .. (\x,\y+1);
    \draw[->] (\x+1,\y) .. controls (\x+.66,\y+.33) and (\x+.66,\y+.66) .. (\x+1,\y+1);
    }
\foreach \x/ \y in {3/1}
    {
    \draw[<-] (\x,\y) .. controls (\x+.33,\y+.33) and (\x+.33,\y+.66) .. (\x,\y+1);
    \draw[<-] (\x+1,\y) .. controls (\x+.66,\y+.33) and (\x+.66,\y+.66) .. (\x+1,\y+1);
    }
\foreach \x/ \y in {2/1, 3/1}
	{
\draw[black, ultra thick] (\x+1,\y) .. controls (\x+.66,\y+.33) and (\x+.66,\y+.66) .. (\x+1,\y+1);
	}
\draw[black, ultra thick] (-1,0) -- (0,0);
\draw[black, ultra thick] (0,2) arc (90:270:.5);
\end{tikzpicture}					& $7_5$		\\
\hline
+-{}-$\Plus\Plus\Minus\Plus\Plus\Minus$+	&	$\sigma_1^2\sigma_2^{-4}\sigma_1$																				&				
\begin{tikzpicture}[scale=.5]
\draw[->] (-1,0) -- (0,0);
\draw[->] (7,0) -- (8,0);
\draw[<-] (0,2) -- (2,2);
\draw[->] (2,0) -- (6,0);
\draw[<-] (6,2) -- (7,2);
\draw[->] (0,2) arc (90:270:.5);
\draw[->] (7,1) arc (-90:90:.5);

\foreach \x/ \y in {0/0, 1/0, 6/0}
    {
    \draw[<-] (\x+1,\y) -- (\x,\y+1);
    \draw[color=white, line width=10] (\x,\y) -- (\x+1,\y+1);
    \draw[->] (\x,\y) -- (\x+1,\y+1);
		}
\foreach \x/ \y in {2/1,4/1}
    {
    \draw[->] (\x,\y) -- (\x+1,\y+1);
    \draw[color=white, line width=10] (\x+1,\y) -- (\x,\y+1);
    \draw[->] (\x+1,\y) -- (\x,\y+1);
    }
\foreach \x/ \y in {3/1,5/1}
    {
    \draw[<-] (\x,\y) -- (\x+1,\y+1);
    \draw[color=white, line width=10] (\x+1,\y) -- (\x,\y+1);
    \draw[<-] (\x+1,\y) -- (\x,\y+1);
    }
\foreach \x/ \y in {2/1, 3/1, 4/1, 5/1}
	{
\filldraw[shift={(\x+.5,\y+.5)}] [fill=black, draw=black, rounded corners] (0,0) circle (5pt);
	}
\end{tikzpicture}			&
\begin{tikzpicture}[scale=.5]
\draw[->] (-1,0) -- (0,0);
\draw[->] (7,0) -- (8,0);
\draw[<-] (0,2) -- (2,2);
\draw[->] (2,0) -- (6,0);
\draw[<-] (6,2) -- (7,2);
\draw[->] (0,2) arc (90:270:.5);
\draw[->] (7,1) arc (-90:90:.5);

\foreach \x/ \y in {0/0,1/0,6/0}
    {
    \draw[->] (\x,\y) .. controls (\x+.33,\y+.33) and (\x+.66,\y+.33) .. (\x+1,\y);
    \draw[->] (\x,\y+1) .. controls (\x+.33,\y+.66) and (\x+.66,\y+.66) .. (\x+1,\y+1);
    }
\foreach \x/ \y in {3/1, 5/1}
    {
    \draw[<-] (\x,\y) .. controls (\x+.33,\y+.33) and (\x+.33,\y+.66) .. (\x,\y+1);
    \draw[<-] (\x+1,\y) .. controls (\x+.66,\y+.33) and (\x+.66,\y+.66) .. (\x+1,\y+1);
    }
\foreach \x/ \y in {2/1, 4/1}
    {
    \draw[->] (\x,\y) .. controls (\x+.33,\y+.33) and (\x+.33,\y+.66) .. (\x,\y+1);
    \draw[->] (\x+1,\y) .. controls (\x+.66,\y+.33) and (\x+.66,\y+.66) .. (\x+1,\y+1);
    }
\foreach \x/ \y in {2/1, 3/1, 4/1, 5/1}
	{
\draw[black, ultra thick] (\x+1,\y) .. controls (\x+.66,\y+.33) and (\x+.66,\y+.66) .. (\x+1,\y+1);
	}
\draw[black, ultra thick] (-1,0) -- (0,0);
\draw[black, ultra thick] (0,2) arc (90:270:.5);
\end{tikzpicture} & $7_2$		\\
\hline
+-{}-${\lightgray\Plus\Plus}\Minus\Minus{\lightgray\Plus}\Minus$+	&	$\sigma_1^2\sigma_2^{-1}\sigma_1^2\sigma_2^{-1}\sigma_1$								&				
\begin{tikzpicture}[scale=.5]
\draw[->] (-1,0) -- (0,0);
\draw[->] (7,0) -- (8,0);
\draw[<-] (0,2) -- (2,2);
\draw[->] (2,0) -- (3,0);
\draw[->] (3,2) -- (5,2);
\draw[->] (5,0) -- (6,0);
\draw[<-] (6,2) -- (7,2);
\draw[->] (0,2) arc (90:270:.5);
\draw[->] (7,1) arc (-90:90:.5);

\foreach \x/ \y in {0/0, 1/0, 6/0}
    {
    \draw[<-] (\x+1,\y) -- (\x,\y+1);
    \draw[color=white, line width=10] (\x,\y) -- (\x+1,\y+1);
    \draw[->] (\x,\y) -- (\x+1,\y+1);
		}
\foreach \x/ \y in {3/0}
    {
    \draw[->] (\x+1,\y) -- (\x,\y+1);
    \draw[color=white, line width=10] (\x,\y) -- (\x+1,\y+1);
    \draw[->] (\x,\y) -- (\x+1,\y+1);
		}
\foreach \x/ \y in {4/0}
    {
    \draw[<-] (\x+1,\y) -- (\x,\y+1);
    \draw[color=white, line width=10] (\x,\y) -- (\x+1,\y+1);
    \draw[<-] (\x,\y) -- (\x+1,\y+1);
		}
\foreach \x/ \y in {2/1}
    {
    \draw[->] (\x,\y) -- (\x+1,\y+1);
    \draw[color=white, line width=10] (\x+1,\y) -- (\x,\y+1);
    \draw[->] (\x+1,\y) -- (\x,\y+1);
    }
\foreach \x/ \y in {5/1}
    {
    \draw[<-] (\x,\y) -- (\x+1,\y+1);
    \draw[color=white, line width=10] (\x+1,\y) -- (\x,\y+1);
    \draw[<-] (\x+1,\y) -- (\x,\y+1);
    }
\foreach \x/ \y in {3/0, 5/1}
	{
\filldraw[shift={(\x+.5,\y+.5)}] [fill=black, draw=black, rounded corners] (0,0) circle (5pt);
	}
\foreach \x/ \y in {2/1, 4/0}
	{
\filldraw[shift={(\x+.5,\y+.5)}] [fill=lightgray, draw=lightgray, rounded corners] (0,0) circle (5pt);
	}
\end{tikzpicture}			&
\begin{tikzpicture}[scale=.5]
\draw[->] (-1,0) -- (0,0);
\draw[->] (7,0) -- (8,0);
\draw[<-] (0,2) -- (2,2);
\draw[->] (2,0) -- (3,0);
\draw[->] (3,2) -- (5,2);
\draw[->] (5,0) -- (6,0);
\draw[<-] (6,2) -- (7,2);
\draw[->] (0,2) arc (90:270:.5);
\draw[->] (7,1) arc (-90:90:.5);

\foreach \x/ \y in {0/0,1/0,6/0}
    {
    \draw[->] (\x,\y) .. controls (\x+.33,\y+.33) and (\x+.66,\y+.33) .. (\x+1,\y);
    \draw[->] (\x,\y+1) .. controls (\x+.33,\y+.66) and (\x+.66,\y+.66) .. (\x+1,\y+1);
    }
\foreach \x/ \y in {4/0, 5/1}
    {
    \draw[<-] (\x,\y) .. controls (\x+.33,\y+.33) and (\x+.33,\y+.66) .. (\x,\y+1);
    \draw[<-] (\x+1,\y) .. controls (\x+.66,\y+.33) and (\x+.66,\y+.66) .. (\x+1,\y+1);
    }
\foreach \x/ \y in {2/1, 3/0}
    {
    \draw[->] (\x,\y) .. controls (\x+.33,\y+.33) and (\x+.33,\y+.66) .. (\x,\y+1);
    \draw[->] (\x+1,\y) .. controls (\x+.66,\y+.33) and (\x+.66,\y+.66) .. (\x+1,\y+1);
    }
\foreach \x/ \y in {3/0, 5/1}
	{
\draw[black, ultra thick] (\x+1,\y) .. controls (\x+.66,\y+.33) and (\x+.66,\y+.66) .. (\x+1,\y+1);
	}
\foreach \x/ \y in {2/1, 4/0}
	{
\draw[lightgray, ultra thick] (\x+1,\y) .. controls (\x+.66,\y+.33) and (\x+.66,\y+.66) .. (\x+1,\y+1);
	}
\draw[black, ultra thick] (-1,0) -- (0,0);
\draw[black, ultra thick] (0,2) arc (90:270:.5);
\end{tikzpicture}		& $7_6$		\\
\hline
\end{tabular}
\caption{All reduced billiard table words for $c=7$ crossings with their associated alternating words, alternating diagrams, and Seifert circles.  The viable vertically-smoothed crossings of the alternating diagram are marked in black; these correspond to new Seifert circles.  The non-viable ones marked in gray.}
\label{tab:c7}
\end{table}


\section{Counting contributions to genus via crossing index}

We now use the genus formula $g(K)=1-\frac{1+s-c}{2}$ for alternating knots and Theorem \ref{thm:Seifert} to calculate a lower bound for the genus of an average 2-bridge knot of given crossing number.  However, the following theorem performs the necessary summation \emph{not} by calculating the genus of each knot but by calculating the contribution to genus by each indexed crossing.

\begin{definition}
Recall from the proof of Theorem \ref{thm:number} that the reduced billiard table word $w$ has length $\ell=c+d$ where $c$ is the number of crossings in the alternating diagram $D$ and where $d$ is the number of doubles.

The crossing index $1 \leq i \leq c$ is the index of the crossing from left to right of the alternating diagram $D$.

Let $d_1(i)$ be the number of doubles appearing in the associated reduced billiard table word $w$ \emph{prior} to the crossing at index $i$ and let $d_2(i)$ be the number of doubles appearing \emph{after} the crossing at index $i$.

Then the sum $d_1(i)+d_2(i)$ is either equal to $d$ or one less than $d$, depending on whether or not the crossing at index in $D$ comes from a single in $w$, respectively.
\end{definition}

We use $d_1(i)$ and $d_2(i)$ below and also in the Main Theorem \ref{thm:main}.

\begin{definition}
For crossing index $1\leq i \leq c$ and number of doubles $d_1(i)$ prior to the crossing at index $i$, define $\delta(\{i\})$ and $\delta(\{i,i+1\})$ to be \textbf{vertical indicator functions} that are equal to 1 or 0 depending on whether the crossing in $D$ at index $i$ is smoothed vertically or horizontally, respectively, as follows:
\begin{align}
\delta(\{i\}) &= 
\begin{cases}
1 \text{ if } i+d_1(i)\not\equiv 1 \text{ mod } 3\\
0 \text{ if } i+d_1(i)\equiv 1 \text{ mod } 3
\end{cases} 
\text{ when the crossing at index $i$ arises from a single and }\\
\delta(\{i,i+1\}) &= 
\begin{cases}
1 \text{ if } i+d_1(i)\not\equiv 2 \text{ mod } 3\\
0 \text{ if } i+d_1(i)\equiv 2 \text{ mod } 3
\end{cases}
\text{ when the crossing at index $i$ arises from a double}.
\end{align}

Note that this comes from Lemma \ref{lem:contributions}.
\end{definition}

Now we arrive at the Main Theorem \ref{thm:main}.

\begin{theorem}
\label{thm:main}
A lower bound on the average genus of a 2-bridge knot with given crossing number $c$ is 
\begin{multline}
\frac{c-1}{2}-\left(\frac{3}{2(2^{c-2}+*)}\right)\left[\sum_{i=2}^{c-1}\sum_{d_1=0}^{i-2}\sum_{\substack{d_2=0 \\ c+d_1+d_2\equiv1}}^{c-i-1}\binom{i-2}{d_1}\delta(\{i\})\binom{c-i-1}{d_2} \right. \\ 
\left.+ \sum_{i=2}^{c-1}\sum_{d_1=0}^{i-2}\sum_{\substack{d_2=0 \\ c+d_1+1+d_2\equiv1}}^{c-i-1}\binom{i-2}{d_1}\delta(\{i,i+1\})\binom{c-i-1}{d_2}\right],
\end{multline}
where * is as in Theorem \ref{thm:number}.

This average is taken over all 2-bridge knots appearing twice except for those with palindromic type only appearing once as in Remark \ref{rem:palindrome}.
\end{theorem}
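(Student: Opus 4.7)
The plan is to combine the alternating-knot genus formula with the upper bound on Seifert circles from Corollary \ref{cor:upper} and then re-sum by crossing index.

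Since every diagram $D$ produced by Theorem \ref{thm:alt} is alternating, the resulting knot $K$ satisfies $g(K)=\frac{c+1-s(K)}{2}$. Applying Corollary \ref{cor:upper}, $s(K)\leq 2+V(K)$, where $V(K)$ is the total number of vertically-smoothed crossings in $D$. Therefore
$$g(K)\geq \frac{c-1}{2}-\frac{V(K)}{2}.$$
Summing over the collection $\mathcal{C}$ of $N=\frac{2^{c-2}+*}{3}$ diagrams from Theorem \ref{thm:number} and dividing, the average genus under the convention of Assumption \ref{ass:model} and Remark \ref{rem:palindrome} is at least
$$\frac{c-1}{2}-\frac{1}{2N}\sum_{K\in\mathcal{C}}V(K).$$

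Next, I would exchange the order of summation: $\sum_{K}V(K)=\sum_{i=1}^{c}N_i$, where $N_i$ is the number of diagrams in $\mathcal{C}$ whose crossing at index $i$ is vertically-smoothed. The endpoints $i=1$ and $i=c$ contribute nothing, because $\varepsilon_1=\varepsilon_c=1$ force word positions $1$ and $\ell\equiv 1\pmod 3$, which are horizontally smoothed by Lemma \ref{lem:contributions}. This justifies restricting to $2\leq i\leq c-1$.

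To evaluate $N_i$, I would stratify by whether $\varepsilon_i=1$ (single) or $\varepsilon_i=2$ (double), and for each case fix $d_1$ = the number of doubles among $\varepsilon_2,\ldots,\varepsilon_{i-1}$ and $d_2$ = the number of doubles among $\varepsilon_{i+1},\ldots,\varepsilon_{c-1}$. The positions of these doubles can be chosen in $\binom{i-2}{d_1}\binom{c-i-1}{d_2}$ ways. The admissibility constraint $\ell=c+d\equiv 1\pmod 3$ becomes $c+d_1+d_2\equiv 1\pmod 3$ in the single case and $c+d_1+1+d_2\equiv 1\pmod 3$ in the double case. Crucially, the word position of crossing $i$ is determined entirely by $d_1$ (namely $i+d_1$), so by Lemma \ref{lem:contributions} the vertical condition is captured exactly by $\delta(\{i\})$ in the single case and $\delta(\{i,i+1\})$ in the double case. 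Substituting these expressions for $N_i$ into the bound above yields the stated formula.

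The main subtlety, and the step I would be most careful with, is the bookkeeping of single versus double contributions at index $i$: every vertically-smoothed crossing must be counted exactly once, the parity constraints in the two sums must match the definitions of $\delta(\{i\})$ and $\delta(\{i,i+1\})$ via Lemma \ref{lem:contributions}, and the count $\binom{i-2}{d_1}\binom{c-i-1}{d_2}$ must exclude positions $1$ and $c$ (which are forced to be singles) so that no overcounting occurs. The rest is routine: the factor $\frac{3}{2(2^{c-2}+*)}$ comes from dividing by $2N$ from Theorem \ref{thm:number}, and the lower-bound inequality is preserved term by term because Corollary \ref{cor:upper} is applied inside a nonnegative sum.
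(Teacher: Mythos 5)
Your proposal is correct and follows essentially the same route as the paper: apply the alternating genus formula, bound $s$ by $2$ plus the count of vertically-smoothed crossings via Corollary \ref{cor:upper}, divide by the count from Theorem \ref{thm:number}, and re-sum over crossing index $i$ stratified by single versus double with $\binom{i-2}{d_1}\binom{c-i-1}{d_2}$ choices and the congruence constraint on $\ell$. Your added observations (that index $i=c$ is excluded because position $\ell\equiv 1\bmod 3$ is horizontal, and that the word position of crossing $i$ is exactly $i+d_1$) are correct refinements of details the paper leaves implicit.
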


\begin{proof}
For convenience we refer to sum of the summations inside $\bm{[}$ $\bm{]}$ as ``$\bm{[}\text{\bf same as above}\bm{]}$.''

By the genus formula $g(K)=1-\frac{1+s-c}{2}$ for alternating knots we have left to show that the average number $s$ of Seifert circles must be at most $2+\left(\frac{3}{2^{c-2}+*}\right)\bm{[}\text{\bf same as above}\bm{]}$. 

By Theorem \ref{thm:number} on the number of knots on our list, we have left to show that the sum of all Seifert circles is at most $2\left(\frac{2^{c-2}+*}{3}\right)+\bm{[}\text{\bf same as above}\bm{]}$. 

By Corollary \ref{cor:upper} to Theorem \ref{thm:Seifert} on an upper bound for the number of Seifert cirles, we have left to show that the total number of vertically-smoothed crossings over all our 2-bridge knots of crossing number $c$ following Remark \ref{rem:palindrome} is $\bm{[}\text{\bf same as above}\bm{]}$. 

First observe that the first crossing $\sigma_1$ is always horizontally smoothed by Lemma \ref{lem:contributions} because it appears in position 1, and so we begin our sum with crossing index $i=2$.

Note next that no horizontally-smoothed crossings contribute to this sum based on the vertical indicator functions $\delta(\{i\})$ and $\delta(\{i,i+1\})$.  We have two sums:  the first corresponding to counting contributions from singles and the second to those from doubles.

Consider some vertically-smoothed crossing at crossing index $i$.  Its vertical indicator function $\delta$ must be 1.  We now count how many reduced billiard table words it appears in.

Suppose there are $d_1$ doubles prior to this crossing.  Then since the first run is a single $+$, there are only $i-2$ choices for where these doubles may occur, giving us $\binom{i-2}{d_1}$ different ways to write a reduced billiard table word $w$ up to the $i$th crossing.

If the $i$th crossing is a single, we perform the first summation; if it is double, we perform the second.  In the first case, we have the sum $c+d_1+d_2$ giving the reduced length $\ell$ of the word, which must be congruent to 1 modulo 3; in the second case we have $c+d_1+1+d_2$ accounting for the double at crossing index $i$.

There are now some number $d_2$ of doubles remaining in the $c-i-1$ positions (where the final crossing cannot have come from a double), giving us $\binom{c-i-1}{d_2}$ different ways to write a reduced billiard table word $w$ after the $i$th crossing.
\end{proof}

We note that there is a symmetry where index $i$ contributes the same as index $c+1-i$.

\begin{corollary}
\label{cor:sym}
For even $c$, the first summation need only go to $\frac{c}{2}$ with the total doubled.  For odd $c$ one may count doubly for $i$ up to $\lfloor \frac{c}{2} \rfloor$ and count singly $\lceil \frac{c}{2} \rceil$.
\end{corollary}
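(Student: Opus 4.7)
The plan is to exhibit an involution on the collection of reduced billiard table words that sends the $i$th crossing of each word $w$ to the $(c+1-i)$th crossing of its image, preserving the vertical indicator function $\delta$ and swapping the counts $d_1(i) \leftrightarrow d_2(i)$. Once this symmetry is in hand, the Corollary is immediate from pairing terms in the outer summation over $i$ in Theorem \ref{thm:main}.

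First, I would define $\tau$ on the collection of reduced billiard table words of Equations (\ref{eq:oddword}) and (\ref{eq:evenword}). For $c$ odd, let $\tau(w) = w^R$ be the reverse of $w$; for $c$ even, let $\tau(w) = \overline{w^R}$ be the mirror of the reverse. A direct check using $\varepsilon_1 = \varepsilon_c = 1$ and $\sum \varepsilon_i \equiv 1 \pmod 3$ confirms that $\tau$ maps the collection to itself and is an involution (whose fixed points are precisely the palindromic-type words of Remark \ref{rem:palindrome}). The $i$th run of $w$ becomes the $(c+1-i)$th run of $\tau(w)$, still a single if it was a single and still a double if it was a double, so the number of doubles before this crossing in $\tau(w)$ equals $d_2(i)$ from $w$ and the number after equals $d_1(i)$, swapping the binomial factors $\binom{i-2}{d_1}\binom{c-i-1}{d_2}$.

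Second, I would verify that $\delta$ is preserved. If the $i$th crossing of $w$ is a single at position $p = i + d_1(i)$ in $w$, then the corresponding crossing in $\tau(w)$ sits at position $\ell - p + 1$; since $\ell \equiv 1 \pmod 3$, we have $\ell - p + 1 \equiv 1 \pmod 3$ if and only if $p \equiv 1 \pmod 3$, so Lemma \ref{lem:contributions} gives the same vertical/horizontal classification on both sides. If the $i$th crossing of $w$ is a double occupying positions $p, p+1$, the image double occupies positions $\ell - p, \ell - p + 1$, and $\ell - p \equiv 2 \pmod 3$ if and only if $p \equiv 2 \pmod 3$. The mirror operation used for even $c$ only swaps $+$ and $-$, leaving positions and run structure untouched, so $\delta$ is unaffected.

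Let $N_i$ denote the total contribution from crossing index $i$ to the bracketed summation in Theorem \ref{thm:main}; the bijection then yields $N_i = N_{c+1-i}$ for all $2 \leq i \leq c-1$. For even $c$ the indices split into pairs $(i, c+1-i)$ with $i$ ranging over $2, \ldots, \tfrac{c}{2}$ and no fixed point, giving $\sum_{i=2}^{c-1} N_i = 2 \sum_{i=2}^{c/2} N_i$. For odd $c$ the index $i = \lceil \tfrac{c}{2} \rceil$ is fixed and the remaining indices pair up, yielding $\sum_{i=2}^{c-1} N_i = 2 \sum_{i=2}^{\lfloor c/2 \rfloor} N_i + N_{\lceil c/2 \rceil}$. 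The main obstacle is the careful position-tracking in the $\delta$ invariance step, correctly handling the single versus double case, the reversal of positions, and the mirror in the even case so that residues modulo $3$ carry through as claimed.
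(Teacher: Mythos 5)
Your proof is correct and rests on exactly the same ingredients as the paper's (very terse) justification: the reversal/mirror involution works precisely because the reduced length is $\equiv 1 \pmod 3$ and because Lemma \ref{lem:contributions} classifies smoothings by position mod $3$ in a way symmetric under $p \mapsto \ell - p + 1$. You have simply made explicit the bijection and the residue bookkeeping that the paper leaves to the reader.
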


\begin{proof}
This is because the reduced billiard table word is of length 1 modulo 3 and because of symmetry of positions 2 and 3 in Proposition \ref{prop:billiardwrithe} and Lemma \ref{lem:contributions}.
\end{proof}

\begin{example}
\label{ex:c6ii}
We continue Example \ref{ex:c6} for the set of all knots obtained in this model for crossing number $c=6$ and demonstrate this computation via Main Theorem \ref{thm:main}.

Note that $c+d=6+d$ must be congruent to 1 modulo 3, so the number of doubles must be congruent to 1 modulo 3.

Because index $i=1$ never contributes to the number of Seifert circles, we begin with index $i=2$.  Only a single $-$ will contribute.  There are three more opportunities for doubles, but since $d\equiv 1\text{ mod } 3$, we have $\binom{3}{1}=3$ different knots with a vertically-smoothed crossing at index $i=2$.

At index $i=3$, we have one opportunity for a double prior and two opportunities for doubles afterward.  We may have a single $+$ here:  if there was a single $-$ immediately prior, this gives $\binom{1}{0}\binom{2}{1}=2$ different knots with a vertically-smoothed crossing $\sigma_1$ crossing at index $i=3$; if there was a double $-{}-$ immediately prior, the vertical indicator function is 0.  We may also have a double $++$, giving us $\binom{1}{0}\binom{2}{0}+\binom{1}{1}\binom{2}{2}=2$ knots with a vertically-smoothed $\sigma_2^{-1}$ crossing at index $i=3$.

By Corollary \ref{cor:sym}, this gives a total of $2(0+3+4)=14$ contributions to the number of Seifert circles, with 5 knots on the list, matching the numbers found in Example \ref{ex:c6}.
\end{example}

\begin{example}
\label{ex:c7ii}
We continue Example \ref{ex:c7} for the set of all knots obtained in this model for crossing number $c=7$.

Note that $c+d=7+d$ must be congruent to 1 modulo 3, so the number of doubles must be congruent to 0 modulo 3.

Because index $i=1$ never contributes to the number of Seifert circles, we begin with index $i=2$.  Only a single $-$ will contribute.  There are four more opportunities for doubles, and since $d\equiv 0\text{ mod } 3$, we have $\binom{4}{0}+\binom{4}{3}=5$ different knots with a vertically-smoothed crossing at index $i=2$.

At index $i=3$, we have one opportunity for a double prior and three opportunities for doubles afterward.  We may have a single $+$ here:  if there was a single $-$ immediately prior, this gives $\binom{1}{0}\binom{3}{0}+\binom{1}{0}\binom{3}{3}=2$ different knots with a vertically-smoothed crossing $\sigma_1$ crossing at index $i=3$; if there was a double $-{}-$ immediately prior, the vertical indicator function is 0.  We may also have a double $++$, giving us $\binom{1}{0}\binom{3}{2}+\binom{1}{1}\binom{3}{1}=6$ knots with a vertically-smoothed $\sigma_2^{-1}$ crossing here.

At index $i=4$, we have two opportunities for a double prior and two opportunities for doubles afterward.  We may have a single $-$ here, but then in order for the indicator function to be 1, the number of doubles prior must be $d_1\not\equiv 0\text{ mod }3$.  This gives $\binom{2}{1}\binom{2}{2}+\binom{2}{2}\binom{2}{1}=4$ different knots with a vertically-smoothed crossing $\sigma_2^{-1}$ crossing at index $i=4$.  We may have a double $-{}-$ here, but then $d_1\not\equiv 1\text{ mod }3$.  This gives $\binom{2}{0}\binom{2}{2}+\binom{2}{2}\binom{2}{0}=2$ different knots with a vertically-smoothed crossing $\sigma_1$ crossing here.

By Corollary \ref{cor:sym}, this gives a total of $2(0+5+8)+6=32$ contributions to the number of Seifert circles, with 11 knots on the list, matching the numbers found in Example \ref{ex:c7}.
\end{example}

\bibliographystyle{amsalpha}

\begin{thebibliography}{BKLMR19}

\bibitem[ACD{\etalchar{+}}15]{Adams:petal}
Colin Adams, Thomas Crawford, Benjamin DeMeo, Michael Landry, Alex~Tong Lin,
  MurphyKate Montee, Seojung Park, Saraswathi Venkatesh, and Farrah Yhee,
  \emph{Knot projections with a single multi-crossing}, J. Knot Theory
  Ramifications \textbf{24} (2015), no.~3, 1550011, 30.

\bibitem[ACSF{\etalchar{+}}15]{Adams:petal2}
Colin Adams, Orsola Capovilla-Searle, Jesse Freeman, Daniel Irvine, Samantha
  Petti, Daniel Vitek, Ashley Weber, and Sicong Zhang, \emph{Bounds on
  \"{u}bercrossing and petal numbers for knots}, J. Knot Theory Ramifications
  \textbf{24} (2015), no.~2, 1550012, 16.

\bibitem[Ada17]{Adams:petal3}
Colin Adams, \emph{Turning knots into flowers and related undergraduate
  research}, Amer. Math. Monthly \textbf{124} (2017), no.~9, 791--806.

\bibitem[BBG{\etalchar{+}}18]{BBGHHKPV}
Hyungryul Baik, David Bauer, Ilya Gekhtman, Ursula Hamenst\"{a}dt, Sebastian
  Hensel, Thorben Kastenholz, Bram Petri, and Daniel Valenzuela,
  \emph{Exponential torsion growth for random 3-manifolds}, Int. Math. Res.
  Not. IMRN (2018), no.~21, 6497--6534.

\bibitem[BHJS94]{BHJS}
M.~G.~V. Bogle, J.~E. Hearst, V.~F.~R. Jones, and L.~Stoilov, \emph{Lissajous
  knots}, J. Knot Theory Ramifications \textbf{3} (1994), no.~2, 121--140.

\bibitem[BKLMR19]{BKLMR}
Sebastian Baader, Alexandra Kjuchukova, Lukas Lewark, Filip Misev, and Arunima
  Ray, \emph{Average four-genus of two-bridge knots}, 2019.

\bibitem[BKP20]{BruKosPec}
Erwan Brugall\'{e}, Pierre-Vincent Koseleff, and Daniel Pecker, \emph{The
  lexicographic degree of the first two-bridge knots}, Ann. Fac. Sci. Toulouse
  Math. (6) \textbf{29} (2020), no.~4, 761--793.

\bibitem[BM04]{BroMak}
Robert Brooks and Eran Makover, \emph{Random construction of {R}iemann
  surfaces}, J. Differential Geom. \textbf{68} (2004), no.~1, 121--157.

\bibitem[BM19]{BelMal}
Yury Belousov and Andrei Malyutin, \emph{Hyperbolic knots are not generic},
  arXiv:1908.06187, 2019.

\bibitem[Bru98]{Brun}
H.~Brunn, \emph{\"uber verknotete {K}urven}, Verhandlungen des Internationalen
  Mathematiker-Kongresses (1898), 256--259.

\bibitem[CCM16]{CanChaMas}
Jason Cantarella, Harrison Chapman, and Matt Mastin, \emph{Knot probabilities
  in random diagrams}, J. Phys. A \textbf{49} (2016), no.~40, 405001, 28.
  \MR{3556174}

\bibitem[CEZK18]{CoEZKr}
Moshe Cohen, Chaim Even-Zohar, and Sunder~Ram Krishnan, \emph{Crossing numbers
  of random two-bridge knots}, Topology Appl. \textbf{247} (2018), 100--114.

\bibitem[CGHS19]{CGHS}
Leslie Colton, Cory Glover, Mark Hughes, and Samantha Sandberg, \emph{A
  {R}eidemeister type theorem for petal diagrams of knots}, Topology Appl.
  \textbf{267} (2019), 106896, 22.

\bibitem[Cha17]{chapman2017asymptotic}
Harrison Chapman, \emph{Asymptotic laws for random knot diagrams}, Journal of
  Physics A: Mathematical and Theoretical \textbf{50} (2017), no.~22, 225001.

\bibitem[CK15]{CoKr}
Moshe Cohen and Sunder~Ram Krishnan, \emph{Random knots using {C}hebyshev
  billiard table diagrams}, Topol. Appl. \textbf{194} (2015), 4--21.

\bibitem[Coh14]{Co:3bridge}
Moshe Cohen, \emph{The {J}ones polynomials of 3-bridge knots via {C}hebyshev
  knots and billiard table diagrams}, arXiv:1409.6614, 2014.

\bibitem[Con70]{Con}
J.~H. Conway, \emph{An enumeration of knots and links, and some of their
  algebraic properties}, Computational {P}roblems in {A}bstract {A}lgebra
  ({P}roc. {C}onf., {O}xford, 1967), Pergamon, Oxford, 1970, pp.~329--358.

\bibitem[CP13]{ChmPit:genus}
Sergei Chmutov and Boris Pittel, \emph{The genus of a random chord diagram is
  asymptotically normal}, J. Combin. Theory Ser. A \textbf{120} (2013), no.~1,
  102--110.

\bibitem[CP16]{ChmPit:poly}
\bysame, \emph{On a surface formed by randomly gluing together polygonal
  discs}, Adv. in Appl. Math. \textbf{73} (2016), 23--42.

\bibitem[Cro59]{Cro:genus}
Richard Crowell, \emph{Genus of alternating link types}, Ann. of Math. (2)
  \textbf{69} (1959), 258--275.

\bibitem[Cro95]{Crom:grid}
Peter~R. Cromwell, \emph{Embedding knots and links in an open book. {I}.
  {B}asic properties}, Topology Appl. \textbf{64} (1995), no.~1, 37--58.

\bibitem[CT14]{Co:clock}
Moshe Cohen and Mina Teicher, \emph{Kauffman's clock lattice as a graph of
  perfect matchings: a formula for its height}, {Electron. J. Combin.}
  \textbf{21} (2014), no.~4, $\#$P4.31.

\bibitem[Doi20]{Doig}
Margaret Doig, \emph{Typical knots: size, link component count, and writhe},
  arXiv:2004.07730, 2020.

\bibitem[DT06]{DT:fin}
Nathan~M. Dunfield and William~P. Thurston, \emph{Finite covers of random
  3-manifolds}, Invent. Math. \textbf{166} (2006), no.~3, 457--521.

\bibitem[Dun14]{Dun:knots}
Nathan Dunfield, \emph{Random knots: a preliminary report}, Slides for the talk
  available at http:// dunfield.info/preprints, 2014.

\bibitem[Dyn06]{Dyn}
I.~A. Dynnikov, \emph{Arc-presentations of links: monotonic simplification},
  Fund. Math. \textbf{190} (2006), 29--76.

\bibitem[EST20]{EST:mcg}
Viveka Erlandsson, Juan Souto, and Jing Tao, \emph{Genericity of
  pseudo-{A}nosov mapping classes, when seen as mapping classes}, Enseign.
  Math. \textbf{66} (2020), no.~3-4, 419--439.

\bibitem[EZF20]{EZFar}
Chaim Even-Zohar and Michael Farber, \emph{Random surfaces with boundary},
  arXiv:2001.01713, 2020.

\bibitem[EZHLN16]{EZHLN}
Chaim Even-Zohar, Joel Hass, Nati Linial, and Tahl Nowik, \emph{Invariants of
  random knots and links}, Discrete Comput. Geom. \textbf{56} (2016), no.~2,
  274--314.

\bibitem[EZHLN18]{EZHLN:distpetal}
Chaim Even-Zohar, Joel Hass, Nathan Linial, and Tahl Nowik, \emph{The
  distribution of knots in the {P}etaluma model}, Algebr. Geom. Topol.
  \textbf{18} (2018), no.~6, 3647--3667.

\bibitem[Fis01]{Fisc}
Gerd Fischer, \emph{Plane algebraic curves}, Student Mathematical Library,
  vol.~15, American Mathematical Society, Providence, RI, 2001, Translated from
  the 1994 German original by Leslie Kay.

\bibitem[Gui95]{Gui}
David~R. Guichard, \emph{Sums of selected binomial coefficients}, The College
  Mathematics Journal \textbf{26} (1995), no.~3, 209--213.

\bibitem[IM17]{IcMa}
Kazuhiro Ichihara and Jiming Ma, \emph{A random link via bridge position is
  hyperbolic}, Topology Appl. \textbf{230} (2017), 131--138.

\bibitem[Ito15]{Ito}
Tetsuya Ito, \emph{On a structure of random open books and closed braids},
  Proc. Japan Acad. Ser. A Math. Sci. \textbf{91} (2015), no.~10, 160--162.

\bibitem[JP98]{JonPrz}
Vaughan F.~R. Jones and J{\'o}zef~H. Przytycki, \emph{Lissajous knots and
  billiard knots}, Knot theory ({W}arsaw, 1995), Banach Center Publ., vol.~42,
  Polish Acad. Sci. Inst. Math., Warsaw, 1998, pp.~145--163.

\bibitem[Kan92]{Kan:genus}
Taizo Kanenobu, \emph{Genus and {K}auffman polynomial of a {$2$}-bridge knot},
  Osaka J. Math. \textbf{29} (1992), no.~3, 635--651.

\bibitem[KP11a]{KosPec4}
P.-V. Koseleff and D.~Pecker, \emph{Chebyshev diagrams for two-bridge knots},
  Geom. Dedicata \textbf{150} (2011), 405--425.

\bibitem[KP11b]{KosPec3}
\bysame, \emph{Chebyshev knots}, J. Knot Theory Ramifications \textbf{20}
  (2011), no.~4, 575--593.

\bibitem[KPR10]{KosPecRou:10crossings}
P.-V. Koseleff, D.~Pecker, and F.~Rouillier, \emph{The first rational
  {C}hebyshev knots}, J. Symbolic Comput. \textbf{45} (2010), no.~12,
  1341--1358.

\bibitem[KPRT18]{KosPecTran:diagrams}
P.-V. Koseleff, D.~Pecker, F.~Rouillier, and C.~Tran, \emph{Computing
  {C}hebyshev knot diagrams}, J. Symbolic Comput. \textbf{86} (2018), 120--141.

\bibitem[LN11]{LinNow}
Nathan Linial and Tahl Nowik, \emph{The expected genus of a random chord
  diagram}, Discrete Comput. Geom. \textbf{45} (2011), no.~1, 161--180.

\bibitem[Ma14]{Ma}
Jiming Ma, \emph{The closure of a random braid is a hyperbolic link}, Proc.
  Amer. Math. Soc. \textbf{142} (2014), no.~2, 695--701.

\bibitem[Mah11]{Mah}
Joseph Maher, \emph{Random walks on the mapping class group}, Duke Math. J.
  \textbf{156} (2011), no.~3, 429--468.

\bibitem[Mal19]{Mal}
Andrei Malyutin, \emph{Hyperbolic links are not generic}, arXiv:1907.04458,
  2019.

\bibitem[Mal20]{Mal:q}
Andrei~V. Malyutin, \emph{On the question of genericity of hyperbolic knots},
  Int. Math. Res. Not. IMRN (2020), no.~21, 7792--7828.

\bibitem[Man20]{Man:rand}
Fedor Manin, \emph{Filling random cycles}, arXiv:2008.10761, 2020.

\bibitem[McC17]{McC:binom}
John McCleary, \emph{Exercises in (mathematical) style}, Anneli Lax New
  Mathematical Library, vol.~48, Mathematical Association of America,
  Washington, DC, 2017, Stories of binomial coefficients.

\bibitem[Mil00]{Mil:MC}
Kenneth~C. Millett, \emph{Monte {C}arlo explorations of polygonal knot spaces},
  Knots in {H}ellas '98 ({D}elphi), Ser. Knots Everything, vol.~24, World Sci.
  Publ., River Edge, NJ, 2000, pp.~306--334.

\bibitem[MOS09]{MOS}
Ciprian Manolescu, Peter Ozsv\'{a}th, and Sucharit Sarkar, \emph{A
  combinatorial description of knot {F}loer homology}, Ann. of Math. (2)
  \textbf{169} (2009), no.~2, 633--660.

\bibitem[MOST07]{MOST}
Ciprian Manolescu, Peter Ozsv{\'a}th, Zolt{\'a}n Szab{\'o}, and Dylan Thurston,
  \emph{On combinatorial link {F}loer homology}, Geom. Topol. \textbf{11}
  (2007), 2339--2412.

\bibitem[MP19]{MirPet}
Maryam Mirzakhani and Bram Petri, \emph{Lengths of closed geodesics on random
  surfaces of large genus}, Comment. Math. Helv. \textbf{94} (2019), no.~4,
  869--889.

\bibitem[Mur58]{Mur:genus}
Kunio Murasugi, \emph{On the genus of the alternating knot. {I}, {II}}, J.
  Math. Soc. Japan \textbf{10} (1958), 94--105, 235--248.

\bibitem[Net58]{Net}
Eugen Netto, \emph{Lehrbuch der {C}ombinatorik}, Chelsea Publishing Company,
  New York, 1958.

\bibitem[Owa18]{Owad:straight}
Nicholas Owad, \emph{Straight knots}, arXiv:1801.10428, 2018.

\bibitem[PR20]{PetRai}
Bram Petri and Jean Raimbault, \emph{A model for random three--manifolds},
  arXiv:2009.11923, 2020.

\bibitem[PT18]{PetTha}
Bram Petri and Christoph Th\"{a}le, \emph{Poisson approximation of the length
  spectrum of random surfaces}, Indiana Univ. Math. J. \textbf{67} (2018),
  no.~3, 1115--1141.

\bibitem[Sch56]{Sch}
Horst Schubert, \emph{Knoten mit zwei {B}r\"ucken}, Math. Z. \textbf{65}
  (1956), 133--170.

\bibitem[Sei35]{Sei}
H.~Seifert, \emph{\"{U}ber das {G}eschlecht von {K}noten}, Math. Ann.
  \textbf{110} (1935), no.~1, 571--592.

\bibitem[Shr20]{Shr:genus}
Sunrose Shrestha, \emph{The topology and geometry of random square-tiled
  surfaces}, arXiv:arXiv:2005.00099, 2020.

\bibitem[ST18]{SuzTra}
Masaaki Suzuki and Anh~T. Tran, \emph{Genera of two-bridge knots and
  epimorphisms of their knot groups}, Topology Appl. \textbf{242} (2018),
  66--72.

\end{thebibliography}

\end{document}